\newtheorem{theorem}{Theorem}
\newtheorem{lemma}{Lemma}
\newtheorem{corollary}{Corollary}
\newtheorem{conjecture}{Conjecture}
\theoremstyle{definition}
\theoremstyle{remark}
\newtheorem{remark}{Remark}
\title[$\ell^p$-improving for discrete curves]{Some subcritical estimates for the $\ell^p$-improving problem for discrete curves}
\author{Spyridon Dendrinos}
\address[S. Dendrinos]{School of Mathematical Sciences, University College Cork, Western Gateway Building, Western Road, Cork, Ireland }
\email{sd@ucc.ie}
\author{Kevin Hughes}
\address[K. Hughes]{School of Mathematics, The University of Bristol, Fry Building, Woodland Road, Bristol, BS8 1UG, UK, and the Heilbronn Insitute for Mathematical Research, Bristol, UK}
\email{khughes.math@gmail.com}
\author{Marco Vitturi}
\address[M. Vitturi]{School of Mathematical Sciences, University College Cork, Western Gateway Building, Western Road, Cork, Ireland}
\email{marco.vitturi@ucc.ie}
\begin{document}

\begin{abstract}
We apply Christ's method of refinements to the $\ell^p$-improving problem for discrete averages $\mathcal{A}_N$ along polynomial curves in $\mathbb{Z}^d$. Combined with certain elementary estimates for the number of solutions to certain special systems of diophantine equations, we obtain some restricted weak-type $p \to p'$ estimates for the averages $\mathcal{A}_N$ in the subcritical regime. The dependence on $N$ of the constants here obtained is sharp, except maybe for an $\epsilon$-loss.
\end{abstract}

\maketitle

% Create a new 1st level heading
\section{Introduction}
We consider here polynomial curves in $\mathbb{Z}^d$ for $d\geq 1$ given by
\[ \gamma(n) = (P_1(n),\ldots, P_d(n)) \]
with $P_1,\ldots,P_d$ univariate polynomials in $\mathbb{Z}[X]$ with the property that their degrees are \emph{separated}, by which we mean that $\deg P_j < \deg P_{j+1}$ for all $j \in \{1,\ldots, d-1\}$. The prototypical example of such  curves is the \emph{moment curve}
\[ \Gamma(n) :=(n,n^2,n^3,\ldots, n^d). \]
To each curve $\gamma$ one can associate the sequence of discrete (forward) averages in $\mathbb{Z}^d$ along the curve given by
\[ \mathcal{A}^{\gamma}_{N}f (\bm{x}) := \frac{1}{N}\sum_{n=1}^{N} f(\bm{x}-\gamma(n)), \]
where $\bm{x}\in\mathbb{Z}^d$; we will typically omit the superscript if no confusion ensues. In analogy with the $L^p$-improving problem for continuous averages, one is interested in studying the $\ell^p \to \ell^q$ mapping properties of the operators $\mathcal{A}_N$ -- in particular, one would like to explore to what extent inequalities of the form
\begin{equation}
\|\mathcal{A}_N f\|_{\ell^q(\mathbb{Z}^d)} \leq C_{p,q,\gamma}(N) \|f\|_{\ell^p(\mathbb{Z})} \label{eq:generic_improving_estimate}
\end{equation}
can hold and what is the (asymptotically) smallest $C_{p,q,\gamma}(N)$ constant for which \eqref{eq:generic_improving_estimate} holds. In this regard, one is led to the following conjecture.
%
% MAIN CONJECTURE
%
\begin{conjecture}\label{main_conjecture}
Let $D = D_{\gamma} := \sum_{j = 1}^{d} \mathrm{deg} P_j$ denote the \emph{total degree} of the curve $\gamma$. For any pair of exponents $p,q$ such that $q \geq p$ and for every $\epsilon > 0$
the estimate
\begin{equation}\label{eq:conjectured_estimate}
\|\mathcal{A}_N f\|_{\ell^{q}(\mathbb{Z}^d)} \lesssim_{\epsilon} N^{\epsilon}(N^{ - D(1/p - 1/q)} + N^{-1/{q'}} + N^{-1/p}) \|f\|_{\ell^{p}(\mathbb{Z}^d)}
\end{equation}
holds for every $f \in \ell^p(\mathbb{Z}^d)$. 
\end{conjecture}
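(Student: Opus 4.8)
The plan is to deploy Christ's method of refinements, which transfers \eqref{eq:conjectured_estimate} into a counting problem for a system of Diophantine equations attached to $\gamma$. First I would dispose of the two elementary terms: $\mathcal{A}_N$ is a contraction on $\ell^1$ and on $\ell^\infty$, and H\"older gives $\|\mathcal{A}_N f\|_{\ell^\infty}\le N^{-1}\|f\|_{\ell^1}$, so interpolation yields the bound $N^{-1/q'}$ for $\ell^1\to\ell^q$ and the bound $N^{-1/p}$ for $\ell^p\to\ell^\infty$; interpolating \eqref{eq:conjectured_estimate} (by Riesz--Thorin) against these two bounds reduces the whole conjectured family to the single estimate at the critical pair
\[ p_c=\frac{2D-1}{D},\qquad q_c=p_c'=\frac{2D-1}{D-1}, \]
at which all three terms on the right of \eqref{eq:conjectured_estimate} coincide, with common value $N^{\epsilon-1/p_c}$. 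A further real-interpolation step against nearby exponents then lets one pass to the restricted weak-type formulation, so that it suffices to bound the incidence form
\[ \Lambda_N(E,F):=N\,\langle\mathcal{A}_N\mathbf 1_E,\mathbf 1_F\rangle=\#\bigl\{(x,n)\in\mathbb Z^d\times[N]:x\in F,\ x-\gamma(n)\in E\bigr\} \]
by $\lesssim_{\epsilon}N^{1+\epsilon-1/p_c}\,|E|^{1/p_c}|F|^{1/q_c}$ for all finite $E,F\subset\mathbb Z^d$.

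Next comes the refinement. Writing $\Lambda_N(E,F)=\alpha|F|$, a pigeonholing argument passes to regularized subsets of $E$ and $F$ of comparable cardinality on which every surviving point has $\gtrsim\alpha$ admissible parameters $n$ in both directions; iterating the alternating flow $x\to x-\gamma(n_1)\to x-\gamma(n_1)+\gamma(n_2)\to\cdots$ for $m$ steps then produces $\gtrsim_m\alpha^{m}|F|$ admissible chains, each encoded by an endpoint together with a tuple $(n_1,\dots,n_m)\in[N]^m$. Tracking this chain count through the combinatorial structure (an $\ell^2$ orthogonality step together with the uniformity of the surviving sets) forces an inequality of the form $\alpha^{m}|F|\lesssim_m|E|^{a_m}|F|^{b_m}J_m(N)^{\gamma_m}$, where the exponents $a_m,b_m,\gamma_m$ are read off from the book-keeping and $J_m(N)$ is the number of solutions in $[N]^{2m}$ of the balanced system
\[ \sum_{j=1}^{m}(-1)^{j}P_i(n_j)=\sum_{j=1}^{m}(-1)^{j}P_i(n_j'),\qquad i=1,\dots,d. \]
One has $J_m(N)\gtrsim N^{m}+N^{2m-D}$ from the diagonal and the main-term solutions, the total degree $D$ entering exactly as the weighted number of equations; the sharp estimate $J_m(N)\lesssim_{m,\epsilon}N^{\epsilon}(N^{m}+N^{2m-D})$, inserted into the displayed inequality, then optimised over $m$ and over the relative sizes of $E$ and $F$, combined with the symmetric version and interpolated, would recover the critical restricted weak-type bound and hence \eqref{eq:conjectured_estimate}; letting $m\to\infty$ sufficiently slowly absorbs the $m$-dependent constants into the factor $N^{\epsilon}$.

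The main obstacle is precisely the Diophantine estimate for $J_m(N)$. For the moment curve this is Vinogradov's mean value theorem, in the form established by Bourgain--Demeter--Guth and by Wooley, which is sharp and would deliver the conjecture up to the $\epsilon$-loss; but for a general separated-degree polynomial curve the system above need not be translation--dilation invariant and no such theorem is on record. Elementary tools — divisor bounds, a geometry-of-numbers count taken equation by equation, and an induction on $d$ that exploits the separated degrees to resolve the system essentially triangularly — recover the trivial range $J_m(N)\asymp N^{m}$ for $m$ below the critical length $m\approx D$, and this already yields \eqref{eq:conjectured_estimate} for a subcritical range of exponents (those reachable with fewer than the critical number of refinement steps); but pushing the argument to the critical pair $(p_c,q_c)$ requires the genuinely hard estimate $J_m(N)\lesssim N^{2m-D+\epsilon}$ for $m\ge D$, a Vinogradov-type input not presently available in this generality. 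Even granting the sharp count, promoting the resulting restricted weak-type bound to the strong-type statement of \eqref{eq:conjectured_estimate} at the critical exponent is a further, non-automatic step. For these reasons I expect the method of refinements, by itself, to establish Conjecture~\ref{main_conjecture} only in a subcritical form rather than in full.
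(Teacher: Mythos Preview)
The statement is a \emph{conjecture}; the paper does not prove it, and indeed explicitly leaves it open except in the two special cases of Corollary~\ref{corollary_main_theorem}. So there is no ``paper's proof'' to compare against. What the paper does prove is Theorem~\ref{main_theorem}, a collection of subcritical restricted weak-type estimates, and your final paragraph correctly lands on the same conclusion: refinements alone yield only a subcritical range.

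That said, two points in your sketch deserve correction. First, your assertion that for the moment curve the sharp Vinogradov bound ``would deliver the conjecture up to the $\epsilon$-loss'' is wrong. The Hausdorff--Young/Vinogradov input (see \S\ref{section_HY_argument}) reaches only $p_0 = \frac{4D}{2D+1}$ on the duality line, strictly short of $p_c = \frac{2D-1}{D}$; and in the refinements framework the homogeneous count $J_m(N)$ is bounded below by $N^m$ (diagonal solutions), which is one power of $N$ too large to close the argument at the critical exponent. The conjecture is open for the moment curve when $d\ge 3$.

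Second, and more substantively, your reduction to the \emph{homogeneous} count $J_m(N)$ misses the paper's main technical point. In \S\ref{subsection_tower_construction} the tower $\mathcal{T}$ is \emph{pruned} so that $\Psi$ never returns to the basepoint; the multiplicity $m(\Psi;\mathcal{T})$ is then governed by the \emph{inhomogeneous} system $\gamma(n_1)-\gamma(m_1)+\dots+\gamma(n_k)-\gamma(m_k)=\mathfrak{z}$ with $\mathfrak{z}\neq 0$. Precisely because diagonal solutions are excluded, one can hope for (and in Lemmata~\ref{lemma_number_of_solutions_case_1}--\ref{lemma_number_of_solutions_case_3} actually prove) $m(\Psi;\mathcal{T})\lesssim_\epsilon N^{k-1+\epsilon}$ rather than the $N^{k+\epsilon}$ that the homogeneous $J_{k,\gamma}(N)$ would give. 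This saving of one power of $N$ is exactly what converts the refinement inequality $\alpha^k\beta^k\lesssim m(\Psi;\mathcal{T})|E|$ into the sharp subcritical bound \eqref{eq:refinement_inequality_subcritical_p_q}. Your outline, routed through $J_m(N)$, would lose this power and fall short even of the subcritical estimates the paper proves.
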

That the condition $q \geq p$ is necessary can be seen by a standard argument due to H\"{o}rmander using the translation invariance of the operators $\mathcal{A}_N$ and the fact that they are local. To show that the best constant $C_{p,q,\gamma}(N)$ for which \eqref{eq:generic_improving_estimate} can hold is at least as large as the right-hand side of \eqref{eq:conjectured_estimate} it suffices to test against some standard examples: in particular, testing against a Dirac delta function one obtains $C_{p,q,\gamma}(N) \gtrsim N^{-1/{q'}}$, while testing against the characteristic function of the image\footnote{This example is simply the dual to the Dirac delta one.} $\gamma([1,N])$ one obtains $C_{p,q,\gamma}(N) \gtrsim N^{-1/p}$; finally, the remaining condition is obtained by testing against the characteristic function of (a suitable dilate of) a parabolic box of sidelengths $\sim N^{\mathrm{deg} P_1} \times \ldots \times N^{\mathrm{deg} P_d}$. Thus the conjecture states that the necessary powers of $N$ given by such examples are also sufficient.
\begin{remark}
The $\epsilon$-loss in the exponents of $N$ does not arise from the aforementioned examples and has been included out of an abundance of caution. It is possibly absent from the true estimates and can sometimes be removed by suitable $\epsilon$-removal lemmata, but we will not concern ourselves with such questions here.
\end{remark}
If an estimate of the form \eqref{eq:generic_improving_estimate} is shown to hold for a certain pair $p,q$ with constant $C_{p,q,\gamma}(N)$ as in \eqref{eq:conjectured_estimate}, we will say that the estimate is \emph{optimal}. In this paper we shall be concerned exclusively with optimal estimates. Moreover, the regime in which the term $N^{- D(1/p - 1/q)}$ at the right-hand side of \eqref{eq:conjectured_estimate} dominates over the other two will be called the \emph{supercritical regime}: by inspection, it consists of those exponents $p,q$ such that 
\begin{equation}\label{eq:description_of_supercritical_regime}
\begin{cases}
& q \geq p, \\
& \frac{D}{q} > \frac{D-1}{p}, \\
& \frac{D}{p'} > \frac{D-1}{q'};
\end{cases}
\end{equation}
on a $(1/p, 1/q)$ diagram, it corresponds to a triangle with one side given by the $1/p = 1/q$ line and the vertex opposite to it given by the critical endpoint $(1/p_c, 1/{p_c'}) = \big(\frac{D}{2D - 1}, \frac{D-1}{2D-1}\big)$ (that is, $p_c = 2 - \frac{1}{D}$). The complement of the supercritical regime in the $q \geq p$ range will be called the \emph{subcritical regime}. Estimates for exponents $p,q$ in either regime will be named accordingly. See Figure \ref{fig:regimes_riesz_diagram} for a pictorial representation of the regimes on a $(1/p, 1/q)$ diagram.
\begin{remark}
There seems to be a certain difference between the discrete and continuous case: in the former the supercritical regime is conjectured to correspond to a triangle in the $(1/p,1/q)$ diagram, while in the latter it is known (for suitably regular curves or for suitably weighted averages) that the range of boundedness corresponds to a trapezoid instead (when $d>2$).
\end{remark}
%
%%%%%%%%%%%%%%%%%%%%
% FIGURE 1 - generic Riesz diagram
%%%%%%%
%
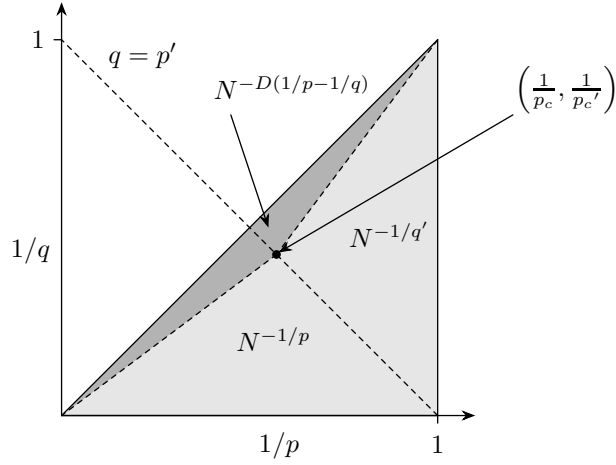
\begin{figure}[ht]
\centering
\begin{tikzpicture}[line cap=round,line join=round,>=Stealth,x=1cm,y=1cm,scale=5]
\clip(-0.45,-0.2) rectangle (1.5,1.2);
\fill[line width=0.5pt,color=black,fill=black,fill opacity=0.3] (0,0) -- (1,1) -- (4/7,3/7) -- cycle;
\fill[line width=0.5pt,color=black,fill=black,fill opacity=0.1] (0,0) -- (1,0) -- (4/7,3/7) -- cycle;
\fill[line width=0.5pt,color=black,fill=black,fill opacity=0.1] (1,1) -- (1,0) -- (4/7,3/7) -- cycle;
\draw [->,line width=0.5pt] (0,0) -- (1.1,0);
\draw [line width=0.5pt,color=black] (0,0)-- (1,1);
\draw [line width=0.5pt,color=black,dash pattern=on 2pt off 2pt] (1,1)-- (4/7,3/7);
\draw [line width=0.5pt,color=black,dash pattern=on 2pt off 2pt] (4/7,3/7)-- (0,0);
\draw [->,line width=0.5pt] (0,0) -- (0,1.1);
\draw [line width=0.5pt,color=black] (0,0)-- (1,0);
%\draw [line width=0.5pt,color=black] (1,0)-- (4/7,3/7);
%\draw [line width=0.5pt,color=black] (4/7,3/7)-- (0,0);
\draw [line width=0.5pt,color=black] (1,1)-- (1,0);
%\draw [line width=0.5pt,color=black] (1,0)-- (4/7,3/7);
%\draw [line width=0.5pt,color=black] (4/7,3/7)-- (1,1);
\draw [line width=0.5pt,dash pattern=on 2pt off 2pt] (1,0)-- (0,1);
\draw (0.5,-0.02) node[anchor=north west] {$1/p$};
\draw (-0.16,0.5) node[anchor=north west] {$1/q$};
\draw (1,-0.03) node[anchor=north] {$1$};
\draw (-0.02,1) node[anchor=east] {$1$};
\draw [line width=0.5pt] (1,-0.02)-- (1,0);
\draw [line width=0.5pt] (0,1)-- (-0.02,1);
\draw (0.44,0.26) node[anchor=north west] {$N^{-1/p}$};
\draw (0.74,0.54) node[anchor=north west] {$N^{-1/{q'}}$};
\draw (0.38,0.93) node[anchor=north west] {$N^{-D(1/p - 1/q)}$};
\draw (0.1,0.9) node[anchor=south west] {$q=p'$};
\draw [black,fill] (4/7,3/7) circle [radius=0.01];
\draw [->,line width=0.5pt] (0.45,0.8) -- (0.55,0.5);
\draw [->,line width=0.5pt] (1.2,0.8) -- (4/7+0.005,3/7+0.005);
\draw (1.18,0.78) node[anchor=south west] {$\Big(\frac{1}{p_c}, \frac{1}{{p_c}'}\Big)$};
\end{tikzpicture}
\caption{\footnotesize A pictorial illustration of Conjecture \ref{main_conjecture}. The supercritical regime corresponds to the darker triangle, which consists of the points $(1/p,1/q)$ for which the term $N^{-D(1/p - 1/q)}$ dominates over the other two in the right-hand side of \eqref{eq:conjectured_estimate}, as indicated. The tip of the triangle corresponds to the critical estimate. The lower lighter triangle below the $q = p'$ line corresponds to the part of the subcritical regime in which the term $N^{-1/p}$ dominates; the lighter triangle above the $q = p'$ line corresponds to the part of the subcritical regime in which the term $N^{-1/{q'}}$ dominates.} \label{fig:regimes_riesz_diagram}
\end{figure}
%
%%%%%%%%%%%%
%
A first incarnation of the problem considered here has been the problem of studying the $\ell^p \to \ell^q$ mapping properties of discrete fractional integrals along discrete varieties (see \cite{HanKovacLaceyMadridYang} for why the two are quite related). This issue has attracted a great deal of attention over the years -- see \cite{IonescuWainger,Kim,Oberlin,Pierce1,Pierce2,SteinWainger1,SteinWainger2} for a selection of works in the area. The $\ell^p$-improving problem for discrete averages $\mathcal{A}_N$ proper is more recent but has seen a certain degree of activity lately. In particular, in \cite{HanLaceyYang} Han, Lacey and Yang have studied the $\ell^p$-improving properties of the averages $\mathcal{A}^{\gamma}_N$ along the polynomial $\gamma(n) = n^2$, while in \cite{HanKovacLaceyMadridYang} the same authors together with Kova\v{c} and Madrid have studied the case of $\gamma = \Gamma$ the moment curve. In the latter, using the celebrated solution to the Vinogradov Mean Value conjecture of Wooley \cite{Wooley} for $d=3$ and of Bourgain, Demeter and Guth \cite{BourgainDemeterGuth} for arbitrary $d$, they have shown for the moment curve the optimal supercritical estimates \eqref{eq:conjectured_estimate} for exponents $p,q$ such that 
\begin{equation*}
\begin{cases}
& q \geq p, \\
& \frac{D+1/2}{q} > \frac{D-1/2}{p}, \\
& \frac{D+1/2}{p'} > \frac{D-1/2}{q'},
\end{cases}
\end{equation*}
which is the (strict) subset of the supercritical regime given by interpolation of the trivial $\ell^p \to \ell^p$ inequalities with the optimal endpoint $\ell^{p_0} \to \ell^{{p_0}'}$ inequality with $p_0 = \frac{4D}{2D+1}$, where here $D = D_{\Gamma} = \frac{d(d+1)}{2}$ (see however Section \ref{section_HY_argument} for why the range they obtain is actually larger). By a transference argument they also obtained some supercritical optimal estimates for the curve given by the monomial $\gamma(n) = n^k$; however, along the $q = p'$ line the transference argument yields the optimal $\ell^p \to \ell^{p'}$ inequality only for $2 \geq p \geq 2 - O(1/k^2)$, while the conjectured endpoint is $2 - 1/k$.\par
We also mention the recent work of Dasu, Demeter and Langowski \cite{DasuDemeterLangowski} in which they have completely solved the analogous $\ell^p$-improving problem for the discrete paraboloid in $\mathbb{Z}^d$ for any $d \geq 2$. The obvious connection with the case considered here is given by $d=2$, in which the paraboloid is simply the parabola $\gamma(n) = (n,n^2)$. In here we will reprove their endpoint result for the parabola (see \textit{(\ref{main_case_2})} of Theorem \ref{main_theorem} below).\par
Paraboloids are not the only discrete hypersurfaces for which the $\ell^p$-improving problem for the associated averages has been studied -- see \cite{Anderson,Hughes,KeslerLacey} for work on the discrete sphere.
\subsection{Main results}
All the aforementioned papers \cite{DasuDemeterLangowski,HanKovacLaceyMadridYang,HanLaceyYang} deal with estimates for $\mathcal{A}_N$ in the supercritical regime. In this work we concentrate instead on the subcritical regime and prove some new optimal subcritical estimates for certain classes of curves. Our results can be summarised in the restricted weak-type estimates on the $q = p'$ line listed in the following theorem.
\begin{theorem}\label{main_theorem}
Let $\gamma$ be a polynomial curve with separated degrees. Then the following optimal inequalities hold:
\begin{enumerate}[(i)]
% 3/2 -> 3
\item \label{main_case_1}Suppose that $d \geq 1$ and that $\gamma(n)$ is not a single linear polynomial. Then we have for any finite sets $E,F \subset \mathbb{Z}^d$
\begin{equation}
\langle \mathcal{A}_N \mathbf{1}_E, \mathbf{1}_F \rangle \lesssim_{\gamma,\epsilon} N^{-2/3 + \epsilon} |E|^{2/3} |F|^{2/3}.  \label{eq:3/2->3_estimate}
\end{equation}
% 5/3 -> 5/2
\item \label{main_case_2}Suppose that $d\geq 2$ and that the first component of $\gamma$ is a linear polynomial. Then we have for any finite sets $E,F \subset \mathbb{Z}^d$
\begin{equation}
\langle \mathcal{A}_N \mathbf{1}_E, \mathbf{1}_F \rangle \lesssim_{\gamma,\epsilon} N^{-3/5 + \epsilon} |E|^{3/5} |F|^{3/5}. \label{eq:5/3->5/2_estimate}
\end{equation}
% 7/4 -> 7/3
\item \label{main_case_3}Suppose that $d\geq 3$ and the first three components $P_1,P_2,P_3$ of $\gamma$ have degrees respectively equal to $1,2$ and $3$. Then we have for any finite sets $E,F \subset \mathbb{Z}^d$
\begin{equation}
\langle \mathcal{A}_N \mathbf{1}_E, \mathbf{1}_F \rangle \lesssim_{\gamma,\epsilon} N^{-4/7 + \epsilon} |E|^{4/7} |F|^{4/7}. \label{eq:7/4->7/3_estimate}
\end{equation}
\end{enumerate}
\end{theorem}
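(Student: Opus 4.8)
The plan is to apply Christ's method of refinements (popularized in the Euclidean setting for proving $L^p$-improving bounds for averages along curves) to the bilinear form $\langle \mathcal{A}_N \mathbf{1}_E, \mathbf{1}_F \rangle$. Writing the bilinear form as a sum over chains, one iterates: starting from a point in $F$, one applies the curve map $\sim k$ times (for some fixed $k$ depending on which case of the theorem one is in) to produce a $k$-tuple $(n_1,\ldots,n_k) \in [1,N]^k$ together with points that, after a pigeonholing/popularity argument, can be assumed to stay inside $E$ and $F$ with controlled multiplicity. The key point of the refinement is that after this iteration one obtains a quantitative lower bound of the form $\langle \mathcal{A}_N \mathbf{1}_E, \mathbf{1}_F\rangle^{C} \gtrsim N^{-a} |E|^{b} |F|^{c}$ provided one can control from above the number of solutions to the Diophantine system expressing that two such $k$-chains land at the same point. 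So the scheme is: (1) run the refinement to reduce matters to counting; (2) prove the relevant Diophantine estimate; (3) optimize the exponents to recover \eqref{eq:3/2->3_estimate}, \eqref{eq:5/3->5/2_estimate}, \eqref{eq:7/4->7/3_estimate}. The three cases differ only in the value of $k$ and in which auxiliary system needs to be counted: case \textit{(\ref{main_case_1})} should correspond to $k=2$, case \textit{(\ref{main_case_2})} to $k=3$, and case \textit{(\ref{main_case_3})} to $k=4$; correspondingly the target exponents $2/3, 3/5, 4/7$ are exactly of the shape $k/(2k-1)$, which is the signature of a two-step-type refinement argument.

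The Diophantine input needed is, roughly, a bound on the number of tuples $(n_1,\ldots,n_k,m_1,\ldots,m_k) \in [1,N]^{2k}$ such that $\sum_{i} \gamma(n_i) = \sum_i \gamma(m_i)$ — or rather the localized/iterated version thereof that actually emerges from Christ's iteration, which typically only involves the first few components of $\gamma$. Here the separated-degrees hypothesis is what makes the elementary count possible: because $\deg P_1 < \deg P_2 < \cdots$, one can solve the system greedily, component by component. For case \textit{(\ref{main_case_1})} we only need that $\gamma$ is genuinely curved in at least two ``directions'' after one differencing step, and a divisor-type bound gives $N^{\epsilon}$ times the trivial diagonal count; the $N^{\epsilon}$ is exactly the source of the $\epsilon$ in the statement. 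For \textit{(\ref{main_case_2})}, having $P_1$ linear means the first coordinate equation becomes linear and can be used to eliminate one variable cheaply, which is what upgrades $k=2$ to $k=3$; for \textit{(\ref{main_case_3})} the additional structure $\deg P_2 = 2$, $\deg P_3 = 3$ lets one solve a further quadratic/cubic pair and push to $k=4$. I would isolate these counting lemmas as separate statements (with explicit constants depending only on $\gamma$) and prove each by elementary manipulations: fix the ``top'' variables, use the highest-degree equation to pin down symmetric functions of the remaining variables, and induct downward.

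The main obstacle I expect is \emph{not} the counting per se but making the refinement machinery produce exactly the system that the elementary count can handle, while keeping all the popularity/pigeonhole losses down to $N^{\epsilon}$ and powers of $\langle \mathcal{A}_N \mathbf{1}_E,\mathbf{1}_F\rangle$ that can be reabsorbed. Concretely: after $k$ steps of refinement one must argue that a positive proportion of the chains are ``non-degenerate'' (the $n_i$ are distinct, or more precisely the relevant Jacobian-type nondegeneracy holds) so that the counting lemma applies to the dominant contribution rather than to a degenerate sub-variety where the bound is worse; and one must track how the normalization $1/N$ in $\mathcal{A}_N$ interacts with the $k$-fold iteration so that the final exponent of $N$ comes out to exactly $-k/(2k-1)$. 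There is also the bookkeeping point that the estimate is only claimed as restricted weak-type (indicator functions) rather than strong-type, which is precisely because Christ's method naturally outputs level-set/indicator estimates; I would not attempt any interpolation beyond recording the endpoint. Finally one checks optimality by comparing with the examples in the introduction (Dirac delta, $\mathbf{1}_{\gamma([1,N])}$, parabolic box) specialized to the $q = p'$ line, which shows $k/(2k-1)$ is the correct power and that these exponents sit in the subcritical regime as claimed.
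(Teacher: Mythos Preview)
Your overall strategy---Christ's method of refinements reducing to a diophantine count, with the divisor bound supplying the $N^{\epsilon}$---is exactly the paper's approach. But two points need correction, and the second is a genuine gap.

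First, the bookkeeping: the paper flows $k$ times in \emph{each} direction, producing a tower $\mathcal{T}\subset[1,N]^{2k}$ with $|\mathcal{T}|\gtrsim\alpha^{k}\beta^{k}$, and the three cases correspond to $k=1,2,3$ (not $2,3,4$). The reformulated target is $\alpha^{k+1}\beta^{k}\lesssim_{\epsilon}N^{k+\epsilon}|E|$; since $\alpha\leq N$ it suffices to prove the sharpened bound $\alpha^{k}\beta^{k}\lesssim_{\epsilon}N^{k-1+\epsilon}|E|$, i.e.\ to show the multiplicity of the map $\Psi(n_{1},m_{1},\ldots,n_{k},m_{k})=\bm{y}+\sum_{j}(\gamma(n_{j})-\gamma(m_{j}))$ over $\mathcal{T}$ is $\lesssim_{\epsilon}N^{k-1+\epsilon}$. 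Your exponent formula $k/(2k-1)$ is the paper's $(k+1)/(2k+1)$ after reindexing, so this alone is harmless.

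Second, and this is the substantive point you are missing: the multiplicity of $\Psi$ is the number of solutions to $\sum_{j}(\gamma(n_{j})-\gamma(m_{j}))=\mathfrak{z}$ for $\mathfrak{z}=\bm{z}-\bm{y}$ ranging over $\Psi(\mathcal{T})$. If $\mathfrak{z}=\bm{0}$ is allowed, the diagonal $n_{j}=m_{j}$ already gives $N^{k}$ solutions, which is one power of $N$ too many and the argument collapses to a triviality. The paper's key manoeuvre is to \emph{prune} the last slice of the tower so that $\Psi$ never returns to the basepoint $\bm{y}$; this removes only $O_{\gamma}(1)$ points per slice (zeroes of a univariate polynomial) and is negligible once $\alpha\gg 1$, but it forces $\mathfrak{z}\neq\bm{0}$ and hence the \emph{inhomogeneous} system. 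Now the divisor-bound argument (factor $P(X)-P(Y)=(X-Y)Q(X,Y)$, and its higher analogues) gives the required $N^{k-1+\epsilon}$. Your ``non-degenerate chains with distinct $n_{i}$'' does not capture this: distinctness of the $n_{i}$ neither excludes the diagonal $n_{j}=m_{j}$ nor is it what the pruning actually enforces. The paper itself flags this pruning as the point of departure from earlier discrete adaptations of Christ's method, and without it the counting lemmas you propose cannot deliver the needed exponent.
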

\begin{remark}
Inspection of the proof of Theorem \ref{main_theorem} reveals that the $\epsilon$-losses in the form of the factors $N^{\epsilon}$ above can be more precisely quantified to be of the form $\lesssim e^{C \frac{\log N}{\log \log N}}$ for some $C>0$.
\end{remark}
\begin{remark}
Observe that, if $Q \in \mathbb{Z}[X]$ is non-constant, \eqref{eq:3/2->3_estimate} holds equally true for curves $\gamma$ and $\gamma \circ Q$ (and is always optimal). From the statement of case \textit{(\ref{main_case_2})}, it appears at first sight that we no longer have this freedom of composition for \eqref{eq:5/3->5/2_estimate}. However, further inspection of the proof reveals that case \textit{(\ref{main_case_2})} of Theorem \ref{main_theorem} admits the following extension. Let $\mathfrak{X} \subset \mathbb{Z}$ be a sequence and let $\mathfrak{X}_N := \mathfrak{X} \cap [1,N]$ for any $N>0$; we can define the \emph{averages along $\gamma$ restricted to $\mathfrak{X}$} as 
\[ \mathcal{A}^{\gamma}_{\mathfrak{X},N}f(\bm{x}) := \frac{1}{|\mathfrak{X}_N|} \sum_{m\in \mathfrak{X}_N} f(\bm{x} - \gamma(m)). \]
When $Q \in \mathbb{Z}[X]$ is a non-constant univariate polynomial\footnote{We take the leading coefficient to be positive, for convenience.} and $\mathfrak{X}^Q$ the set of its values, that is $\mathfrak{X}^Q= \{ Q(n) : n \in \mathbb{Z} \}$, the operator $\mathcal{A}^{\gamma}_{\mathfrak{X}^Q,N}$ is essentially the same as $\mathcal{A}^{\gamma\circ Q}_{M}$ with $M \sim N^{1/\deg Q}$ (notice that $|\mathfrak{X}^Q_N| \sim N^{1/\deg Q}$). In this situation, inspecting the proof of Theorem \ref{main_theorem} (and in particular that of Lemma \ref{lemma_number_of_solutions_case_2}) we see that case \textit{(\ref{main_case_2})} of Theorem \ref{main_theorem} extends to 
\[ \langle \mathcal{A}_{\mathfrak{X}^Q,N} \mathbf{1}_E, \mathbf{1}_F \rangle \lesssim_{\gamma,\epsilon} |\mathfrak{X}^Q_N|^{-3/5 + \epsilon} |E|^{3/5} |F|^{3/5} \]
(so that \eqref{eq:5/3->5/2_estimate} holds for $\gamma \circ Q$ as well). Testing against the Dirac delta function (or its dual example) verifies that this estimate is optimal.
\end{remark}
We stress that in each of the three cases in Theorem \ref{main_theorem} the estimate is indeed in the subcritical\footnote{Although \eqref{eq:3/2->3_estimate},\eqref{eq:5/3->5/2_estimate} can be critical in special cases; see Corollary \ref{corollary_main_theorem}.} regime: in the first case the critical exponent $p_c$ is at least $2 - 1/2 = 3/2$, in the second at least $2 - 1/3 = 5/3$ and in the third at least $2 - 1/6 = 11/6 > 7/4$.\par
Lorentz interpolation of each of the above estimates with the trivial $\ell^p \to \ell^{\infty}$ and $\ell^1 \to \ell^q$ inequalities yields a range of strong-type optimal subcritical estimates. If one has optimal estimates on the critical lines (that separate the super- and subcritical regimes; see Figure \ref{fig:regimes_riesz_diagram}), it is possible to interpolate with those as well and obtain an even larger range of subcritical estimates (see Section \ref{section_preliminaries_reductions}). In particular, when the exponents of estimates \eqref{eq:3/2->3_estimate} and \eqref{eq:5/3->5/2_estimate} coincide with the critical exponents $p_c, {p_c}'$ we obtain by interpolation the full Conjecture \ref{main_conjecture} -- see the following corollary.
\begin{corollary}\label{corollary_main_theorem}
Conjecture \ref{main_conjecture} holds for all $q \geq p$ when:
\begin{enumerate}[(i)]
\item \label{corollary_case_quadratic}$d=1$ and $\gamma(n)$ is a quadratic polynomial;
\item \label{corollary_case_parabola}$d=2$ and $\gamma(n)$ is a parabola in $\mathbb{Z}^2$.
\end{enumerate}
\end{corollary}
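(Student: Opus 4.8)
The plan is to derive Corollary \ref{corollary_main_theorem} from Theorem \ref{main_theorem} by a pure interpolation argument, the whole point being that in the two listed cases the exponent appearing in Theorem \ref{main_theorem} is exactly the critical one. In case \textit{(\ref{corollary_case_quadratic})} the curve is a quadratic polynomial in $\mathbb{Z}$, so $D=2$ and $p_c = 2-1/D = 3/2$, i.e.\ $1/p_c = 2/3$; since a quadratic is not linear, \eqref{eq:3/2->3_estimate} applies and reads precisely
\[ \langle \mathcal{A}_N \mathbf{1}_E, \mathbf{1}_F\rangle \lesssim_{\gamma,\epsilon} N^{-1/p_c+\epsilon}\,|E|^{1/p_c}\,|F|^{1/p_c}, \]
which is the restricted weak-type $(p_c,p_c')$ inequality with the constant predicted by Conjecture \ref{main_conjecture} at the critical point $(1/p_c,1/p_c')$, where all three terms on the right-hand side of \eqref{eq:conjectured_estimate} equal $N^{-1/p_c}$. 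In case \textit{(\ref{corollary_case_parabola})}, after an affine change of coordinates in $\mathbb{Z}^2$ and a reparametrisation of $n$ -- neither of which alters the $\ell^p$-improving estimates or the $N$-power in their constants -- a parabola in $\mathbb{Z}^2$ has linear first component and quadratic second component, so it falls under case \textit{(\ref{main_case_2})} of Theorem \ref{main_theorem}, has total degree $D=1+2=3$, hence $p_c = 5/3$ and $1/p_c = 3/5$, so \eqref{eq:5/3->5/2_estimate} is again the restricted weak-type $(p_c,p_c')$ inequality with the conjectured constant at the critical point. Note $p_c < p_c'$ in both cases.

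Alongside this critical-point input I would use the elementary estimates for $\mathcal{A}_N$, all consistent with \eqref{eq:conjectured_estimate}: Minkowski's inequality gives $\|\mathcal{A}_N f\|_{\ell^p}\le \|f\|_{\ell^p}$ (constant $1$, matching the conjectured value on the diagonal $q=p$); H\"older's inequality in the summation variable $n$ gives $\|\mathcal{A}_N f\|_{\ell^\infty}\le N^{-1/p}\|f\|_{\ell^p}$ (constant $N^{-1/p}$, matching the conjectured value on the edge $q=\infty$); and dualising the latter for the adjoint of $\mathcal{A}_N$ gives $\|\mathcal{A}_N f\|_{\ell^q}\le N^{-1/{q'}}\|f\|_{\ell^1}$ (constant $N^{-1/{q'}}$, matching the conjectured value on the edge $p=1$). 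In particular these furnish the conjectured constant -- respectively $1$, $N^{-1}$ and $1$ -- at the three vertices $(0,0),(1,0),(1,1)$ of the closed triangle representing the range $q\ge p$ in the $(1/p,1/q)$-diagram.

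The interpolation then runs as follows. That triangle is tiled by the three triangles of Figure \ref{fig:regimes_riesz_diagram}, each having the critical point $(1/p_c,1/p_c')$ as a vertex: the supercritical triangle $(0,0),(1,1),(1/p_c,1/p_c')$, the lower subcritical triangle $(0,0),(1,0),(1/p_c,1/p_c')$, and the upper subcritical triangle $(1,0),(1,1),(1/p_c,1/p_c')$. On the interior of each, the conjectured constant on the right-hand side of \eqref{eq:conjectured_estimate} reduces to a single power of $N$ with exponent affine in $(1/p,1/q)$, namely $N^{-D(1/p-1/q)}$, $N^{-1/p}$ and $N^{-1/{q'}}$ respectively; and, by the two previous paragraphs, at each vertex of each of these triangles this power of $N$ agrees -- up to the harmless $N^{\epsilon}$ -- with the restricted-weak-type or strong-type estimate already available there. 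Applying real interpolation in the form adapted to bilinear forms -- Marcinkiewicz-type interpolation of $\langle \mathcal{A}_N\mathbf{1}_E,\mathbf{1}_F\rangle$ between the restricted-weak-type datum at $(1/p_c,1/p_c')$ and the strong-type data at the other two vertices, which produces strong-type conclusions since all exponents involved satisfy $q\ge p$ -- on each triangle separately yields the strong-type estimate \eqref{eq:conjectured_estimate} at every interior point, with the conjectured constant; on the edges lying on the diagonal or on $p=1$, $q=\infty$ the estimate holds directly from the elementary bounds, while on the two edges emanating from $(1/p_c,1/p_c')$ it holds in the relative interior by the same interpolation. This establishes Conjecture \ref{main_conjecture} for all $q\ge p$ in both cases, the sole caveat being that at the critical point $(1/p_c,1/p_c')$ itself the argument only delivers the restricted-weak-type version.

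The only genuinely delicate aspect is the bookkeeping of the constants, and what makes it go through is precisely the conjunction used above: the conjectured constant, a maximum of three powers of $N$ with exponents affine in $(1/p,1/q)$, is log-affine on each of the three triangles, and at each vertex of each triangle the available estimate carries exactly the conjectured constant (modulo $N^{\epsilon}$). It is the second condition that pins down $D\in\{2,3\}$, and with it the two listed cases: for any other admissible curve the exponent in \eqref{eq:3/2->3_estimate} or \eqref{eq:5/3->5/2_estimate} strictly exceeds $1/p_c$, the endpoint supplied by Theorem \ref{main_theorem} falls strictly inside the subcritical regime, and the interpolation no longer exhausts the supercritical triangle. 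One cannot hope to close the last gap at the critical point by this method, since along every line through $(1/p_c,1/p_c')$ the conjectured constant has a local minimum there and so any secant interpolation overshoots it; but this is a single endpoint.
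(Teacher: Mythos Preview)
Your proposal is correct and matches the paper's approach: the paper does not give a detailed proof of the corollary, merely noting that when the exponents in \eqref{eq:3/2->3_estimate} and \eqref{eq:5/3->5/2_estimate} coincide with $(1/p_c,1/p_c')$ one obtains the full conjecture ``by interpolation'' with the trivial estimates, and you have spelled out exactly this interpolation by partitioning the $(1/p,1/q)$-triangle into the three subtriangles of Figure~\ref{fig:regimes_riesz_diagram} sharing the critical vertex. One minor remark: the affine change of coordinates you invoke for the parabola is unnecessary, since under the paper's standing separated-degrees hypothesis a parabola in $\mathbb{Z}^2$ already has $\deg P_1=1$ and $\deg P_2=2$, so case \textit{(\ref{main_case_2})} of Theorem~\ref{main_theorem} applies directly.
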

Case \textit{(\ref{corollary_case_quadratic})} of the Corollary recovers the corresponding results of \cite{HanKovacLaceyMadridYang,HanLaceyYang}, while case \textit{(\ref{corollary_case_parabola})} recovers the 2D case of \cite{DasuDemeterLangowski}.
\begin{remark}
For the parabola, the $\epsilon$-removal technology of \cite{DasuDemeterLangowski} allows one to remove the $\epsilon$-loss (caused by interpolation) from the interior of the supercritical regime. The reach of such technology seems currently limited to the supercritical regime. We mention here in this regard that an easy application of our methods shows that when $d\geq 2$ and the first component of $\gamma$ is linear then \eqref{eq:3/2->3_estimate} holds without the $\epsilon$-loss; by interpolation, one obtains $\epsilon$-free optimal estimates in a strict subset of the subcritical range.
\end{remark}
The proof of Theorem \ref{main_theorem} relies on an adaptation to our arithmetic setting of the method of refinements introduced by Christ in \cite{Christ99} to study the continuous $L^p$-improving problem. The problem has been completely solved by the method in the case of continuous curves, see \cite{Christ99,DendrinosLaghiWright,Stovall}. Summarising briefly for the unaware reader, the essence of the method (at least in our adaptation) consists in a combinatorial reformulation of the restricted weak-type inequalities that translates them into lower bounds for $|E|$ in terms of parameters $\alpha,\beta$ such that $\alpha |F|= \beta|E| = \langle \mathcal{A}_N \bm{1}_E, \bm{1}_F\rangle$; this allows one to set up a ``flowing'' procedure based on $\gamma$, taking us alternatingly from the set $E$ to the set $F$ and vice versa. One can show that the procedure yields a ``large'' (in terms of $\alpha, \beta$) set of parameters $(n_1, m_1, \ldots, n_k,m_k) \in [1,N]^{2k}$ such that for a carefully chosen $\bm{y} \in E$ we have for all these parameters
\[ \bm{y} + \gamma(n_1) - \gamma(m_1) + \ldots + \gamma(n_k) - \gamma(m_k) \in E. \]
If one views this expression as a map, in our discrete context it is possible to obtain a lower bound for $|E|$ by estimating the multiplicity of the map; this lower bound then translates back into restricted weak-type estimates. The multiplicity estimates translate quite directly into a classical number-theory problem -- that of bounding the number of solutions to certain diophantine equations.\par
While preparing this manuscript we became aware that -- not surprisingly -- the method of refinements has been applied to such discrete questions before. In fact, Oberlin \cite{Oberlin} and Kim \cite{Kim} used it to prove $\ell^p \to \ell^q$ estimates for certain discrete fractional integrals along curves -- \cite{Kim} in particular provides somewhat general conditional statements. In contrast to \cite{Kim,Oberlin}, in our adaptation of the method we additionally prune the combinatorial tower of parameters so as to ensure that in our multiplicity bounds we never encounter equations of the form \eqref{eq:generic_diophantine_system} (see Section \ref{section_HY_argument}) but rather their inhomogeneous version. This pruning is crucial to us as the latter are expected to have fewer solutions than \eqref{eq:generic_diophantine_system}, since they do not admit solutions of diagonal type.\par
The rest of the paper is organised as follows: in Section \ref{section_preliminaries} we review certain basic facts about the averages $\mathcal{A}_N$ and describe how the aforementioned result of \cite{HanKovacLaceyMadridYang} is proven; in Section \ref{section_method_of_refinements_setup} we develop an arithmetic version of the method of refinements as needed in our case, with which the proof of Theorem \ref{main_theorem} is reduced to proving bounds for the number of solutions to certain diophantine systems that arise in the process; in Section \ref{section_number_of_solutions} we prove such bounds by elementary arguments, thus completing the proof of Theorem \ref{main_theorem}.
%
% NOTATION
%
\subsection*{Notation and basic facts}
Throughout this manuscript we use $A \lesssim B$ to denote the inequality $A \leq C B$ for some suppressed constant $C>0$; $A \sim B$ means $A \lesssim B$ and $B \lesssim A$. When the suppressed constant depends on a certain list $\mathcal{L}$ of parameters we highlight this by writing $A \lesssim_{\mathcal{L}} B$. Moreover, in conditional statements we will use $A \gg B$ to denote the inequality $A \geq C B$ for some \emph{sufficiently large} constant $C>0$.\par
We use $[1,N]$ as shorthand for the set of integers $\{1, \ldots, N\}$. If $E \subset \mathbb{Z}^d$ then $|E|$ denotes its cardinality.\par
In Section \ref{section_number_of_solutions} we will repeatedly make use of the so-called divisor bound, which states that the number of distinct divisors of $n \neq 0$ is bounded by $\lesssim e^{C \frac{\log n}{\log \log n}}$; however, we will limit ourselves to the weaker version that states that for every $\epsilon > 0$ the number of divisors of $n\lesssim N$ is $\lesssim_\epsilon N^\epsilon$.
\section{Preliminaries}\label{section_preliminaries}
In this section we record some observations about the affine structure of the problem and then discuss how one can obtain estimates of the form \eqref{eq:generic_improving_estimate} by a simple Hausdorff-Young argument. The former will allow us to reduce case \textit{(\ref{main_case_1})} of Theorem \ref{main_theorem} to curves $\gamma$ of the form $P(n)$, case \textit{(\ref{main_case_2})} of Theorem \ref{main_theorem} to curves $\gamma$ of the form $(n,P(n))$ and case \textit{(\ref{main_case_3})} to $\gamma$ the moment curve in $d=3$, that is $\Gamma(n) = (n,n^2,n^3)$. The latter discussion will provide some context and allow us to illustrate what is the range in which Conjecture \ref{main_conjecture} is currently known to hold for the moment curve.
\subsection{Affine transformations and projections}\label{section_preliminaries_reductions}
In the discrete context an affine transformation that maps $\mathbb{Z}^d$ into itself does not necessarily have an inverse, that which hampers the usual change of variable arguments. Consider however the following special examples of linear transformations (we ignore the translations since they are harmless): 
\begin{enumerate}[(i)]
\item a linear transformation $T : \mathbb{Z}^d \to \mathbb{Z}^d$ such that 
\[ T(x_1, \ldots, x_d) = (a_1 x_1, \ldots, a_d x_d)  \quad \forall \bm{x} \in \mathbb{Z}^d \]
for some non-zero integers $a_1, \ldots, a_d$, which we will refer to as \emph{integer dilation}; 
\item a linear transformation $T : \mathbb{Z}^d \to \mathbb{Z}^d$ such that 
\[ T(x_1, \ldots, x_d) = (x_1, \ldots, x_{j-1}, x_j - b x_k, x_{j+1},\ldots, x_d) \quad \forall \bm{x} \in \mathbb{Z}^d \]
for some integer $b$ and $k \neq j$, which we will refer to as \emph{integer shear}.
\end{enumerate}
Then the following still holds.
\begin{lemma}\label{lemma_affine_transformations}
Let $T : \mathbb{Z}^d \to \mathbb{Z}^d$ be a linear transformation obtained by the composition of integer dilations and integer shears. Then for any curve $\gamma$ and any $N,p,q$ such that $q \geq p$ we have for the averages associated to $T\gamma$
\[ \| \mathcal{A}^{T\gamma}_N \|_{\ell^p(\mathbb{Z}^d) \to \ell^q(\mathbb{Z}^d)} \leq  \| \mathcal{A}^{\gamma}_N \|_{\ell^p(\mathbb{Z}^d) \to \ell^q(\mathbb{Z}^d)}, \]
and similarly for the restricted weak-type norms.
\end{lemma}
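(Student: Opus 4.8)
The plan is to prove the inequality separately for each of the two building-block transformations (integer dilations and integer shears) and then compose; since the operator norm of a composition is at most the product of the operator norms, and each individual step will be shown to \emph{not increase} the norm, the general case follows by induction on the number of factors in $T$. So it suffices to treat a single integer dilation and a single integer shear.

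First I would dispose of the integer shear. The key observation is that an integer shear $T$ is a \emph{bijection} of $\mathbb{Z}^d$ onto itself (its inverse is the shear with $b$ replaced by $-b$). Writing $g = f \circ T^{-1}$, one checks directly from the definition that $\mathcal{A}^{T\gamma}_N f(\bm{x}) = \mathcal{A}^{\gamma}_N g (T\bm{x})$: indeed $f(\bm{x} - T\gamma(n)) = g(T\bm{x} - \gamma(n))$ because $T$ is linear and invertible. Since $T$ is a measure-preserving bijection of the counting measure on $\mathbb{Z}^d$, we have $\|\mathcal{A}^{T\gamma}_N f\|_{\ell^q} = \|\mathcal{A}^{\gamma}_N g\|_{\ell^q}$ and $\|g\|_{\ell^p} = \|f\|_{\ell^p}$, which gives the desired norm inequality (in fact an equality) for shears, and the same argument applied to indicator functions $\mathbf 1_E$, noting $T$ maps sets to sets of the same cardinality, handles the restricted weak-type norms.

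The integer dilation is the genuinely non-invertible case and is where the hypothesis $q \geq p$ enters. Here $T(\bm x) = (a_1 x_1, \dots, a_d x_d)$; write $a = (a_1,\dots,a_d)$ and let $\Lambda = T(\mathbb{Z}^d)$ be the sublattice of index $\prod_j |a_j|$. The relation $\mathcal{A}^{T\gamma}_N f(\bm x) = \frac1N \sum_n f(\bm x - T\gamma(n))$ only sees the values of $f$ on cosets of $\Lambda$ determined by $\bm x \bmod a$; more precisely, for fixed residue $\bm r \in \prod_j \mathbb{Z}/a_j\mathbb{Z}$ the operator restricted to $\bm x \equiv \bm r$ is, after the change of variables $\bm x = T\bm u + \bm r$, essentially $\mathcal{A}^{\gamma}_N$ acting on the ``pulled-back'' function $\bm u \mapsto f(T\bm u + \bm r)$ on $\mathbb{Z}^d$. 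Thus $\|\mathcal{A}^{T\gamma}_N f\|_{\ell^q(\bm x \equiv \bm r)} \leq \|\mathcal{A}^{\gamma}_N\|_{\ell^p \to \ell^q}\, \|f(T\,\cdot + \bm r)\|_{\ell^p}$, and it remains to sum over the finitely many residues $\bm r$. To assemble the pieces I would use that for exponents $q \geq p$ the $\ell^q$ sum of the pieces is dominated by the $\ell^p$ sum of the pieces (the embedding $\ell^p \hookrightarrow \ell^q$ over the finite index set of residues, with constant $1$): namely
\[
\|\mathcal{A}^{T\gamma}_N f\|_{\ell^q(\mathbb{Z}^d)} = \Big( \sum_{\bm r} \|\mathcal{A}^{T\gamma}_N f\|_{\ell^q(\bm x \equiv \bm r)}^q \Big)^{1/q} \leq \Big( \sum_{\bm r} \big(\|\mathcal{A}^\gamma_N\|_{p\to q}\|f(T\cdot+\bm r)\|_{\ell^p}\big)^p \Big)^{1/p},
\]
and the last expression equals $\|\mathcal{A}^\gamma_N\|_{p\to q}\, \|f\|_{\ell^p(\mathbb{Z}^d)}$ since the cosets $T\mathbb{Z}^d + \bm r$ partition $\mathbb{Z}^d$. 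The restricted weak-type version is obtained in the same way by running the argument with $f = \mathbf 1_E$ and testing against $\mathbf 1_F$, splitting $F$ according to residues as well.

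The main obstacle — really the only subtle point — is the passage from a bound on each residue-class piece to a bound on the whole $\ell^q$ norm in the dilation case; this is exactly the place where $q \geq p$ is used and cannot be dropped, since for $q < p$ the embedding over the residue index set goes the wrong way. Everything else is bookkeeping: verifying the identity $\mathcal{A}^{T\gamma}_N f(\bm x) = \mathcal{A}^\gamma_N g(T\bm x)$ for shears, checking that dilations and shears really do map $\mathbb{Z}^d$ to $\mathbb{Z}^d$ and preserve cardinalities of finite sets (so that restricted weak-type norms transfer), and noting that the composition bound multiplies the (trivial, equal to $1$) constants. One should also remark that $T\gamma$ is again a curve of the required type when $\gamma$ is, but for this lemma we do not even need the separated-degrees hypothesis — it holds for arbitrary $\gamma$.
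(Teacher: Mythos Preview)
Your approach is essentially identical to the paper's: handle shears by a change of variables (since they are bijections of $\mathbb{Z}^d$) and handle dilations by decomposing into residue classes and using the embedding $\ell^p \hookrightarrow \ell^q$ over the finite residue set when $q\ge p$. One small slip: with $g=f\circ T^{-1}$ you actually get $g(T\bm{x}-\gamma(n))=f(\bm{x}-T^{-1}\gamma(n))$, so the correct substitution is $g=f\circ T$ together with $\mathcal{A}^{T\gamma}_N f(\bm{x})=\mathcal{A}^{\gamma}_N g(T^{-1}\bm{x})$; this is a harmless bookkeeping error and the conclusion is unchanged.
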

\begin{proof}
We consider here only the strong operator norm, since the result for restricted weak-type norms requires only trivial modifications.\par
It clearly suffices to check for a single integer dilation and a single integer shear separately. Abandoning for a moment our convention about the ordering of the polynomial degrees in the components of $\gamma$, we can assume that 
\[ T(x_1, \ldots, x_d) = (a x_1, x_2 \ldots, x_d) \]
or 
\[ T(x_1, \ldots, x_d) = (x_1 - b x_2, x_2, x_3, \ldots, x_d). \]
In the first case, write $\bm{x} = (x_1,\bm{x}') \in \mathbb{Z} \times \mathbb{Z}^{d-1}$ and let $z,r$ be the unique integers such that $x_1 = az + r$ with $0\leq r<a$; then if we let $g_r(s,\bm{y}) := f(as + r, \bm{y})$
we have 
\[ \mathcal{A}^{T\gamma}_{N} f(x_1, \bm{x}') = \mathcal{A}^{\gamma}_{N} g_r(z,\bm{x}'). \]
Therefore we have 
\begin{align*}
\sum_{\bm{x} \in \mathbb{Z}^d} |\mathcal{A}^{T\gamma}_{N} f(\bm{x})|^q & = \sum_{r=0}^{a-1} \sum_{(z,\bm{x}') \in \mathbb{Z} \times \mathbb{Z}^{d-1}} |\mathcal{A}^{\gamma}_{N} g_r(z,\bm{x}')|^q \\
& \leq \|\mathcal{A}^{\gamma}_{N}\|_{p \to q}^q \sum_{r = 0}^{a-1}\Big(\sum_{(z,\bm{y}) \in \mathbb{Z} \times \mathbb{Z}^{d-1}} |g_r(z,\bm{y})|^p \Big)^{q/p} \\
& \leq \|\mathcal{A}^{\gamma}_{N}\|_{p \to q}^q \Big( \sum_{r = 0}^{a-1}\sum_{(z,\bm{y}) \in \mathbb{Z} \times \mathbb{Z}^{d-1}} |g_r(z,\bm{y})|^p \Big)^{q/p} = \|\mathcal{A}^{\gamma}_{N}\|_{p \to q}^q \|f\|_{\ell^p(\mathbb{Z}^d)}^q,
\end{align*}
where we have used the fact that for $q \geq p$ the $\ell^q$ norm is smaller than the $\ell^p$ one. This proves the Lemma for integer dilations.\par
In the second case, the transformation $T$ is a linear bijection over $\mathbb{Z}^d$ with well-defined inverse. A standard change of variables argument (notice that integer shears leave the $\ell^p$ norms unchanged) then concludes the proof of the Lemma.
\end{proof}
It is an immediate consequence of Lemma \ref{lemma_affine_transformations} that it will suffice to prove case \textit{(\ref{main_case_2})} of Theorem \ref{main_theorem} for curves of the form $\gamma(n) = (n, P_2(n), \ldots, P_d(n))$; moreover, it will similarly suffice to prove case \textit{(\ref{main_case_3})} of Theorem \ref{main_theorem} for curves of the form $\gamma(n) = (n,n^2,n^3, P_4(n), \ldots, P_d(n))$. However, as anticipated, a further reduction is possible -- we encapsulate it in the following lemma.
\begin{lemma}\label{lemma_projections}
Let $\gamma_1, \gamma_2$ be polynomial curves mapping into $\mathbb{Z}^{d_1}, \mathbb{Z}^{d_2}$ respectively; $(\gamma_1(n),\gamma_2(n))$ is then a polynomial curve mapping into $\mathbb{Z}^{d_1} \times \mathbb{Z}^{d_2}$. If $q \geq p$ then we have 
\[ \| \mathcal{A}^{(\gamma_1,\gamma_2)}_{N}\|_{\ell^{p}(\mathbb{Z}^{d_1} \times \mathbb{Z}^{d_2}) \to \ell^q(\mathbb{Z}^{d_1} \times \mathbb{Z}^{d_2})} \leq \| \mathcal{A}^{\gamma_1}_{N} \|_{\ell^p(\mathbb{Z}^{d_1}) \to \ell^q(\mathbb{Z}^{d_1})}, \]
and similarly for the restricted weak-type norms and for $\gamma_2$ in place of $\gamma_1$.
\end{lemma}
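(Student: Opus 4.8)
The plan is to deduce the inequality from a slicing argument that trades the extra $\gamma_2$-directions for translations, which are harmless for the $\ell^r$-norms, and then uses the hypothesis $q\geq p$ to compare $\ell^q$ with $\ell^p$ in those directions. Write a point of $\mathbb{Z}^{d_1}\times\mathbb{Z}^{d_2}$ as $\bm{x}=(\bm{x}_1,\bm{x}_2)$ and, for a function $f$ on $\mathbb{Z}^{d_1+d_2}$, write $f(\bm{x}_1,\cdot)$ for the induced function on $\mathbb{Z}^{d_2}$. Since
\[ \mathcal{A}^{(\gamma_1,\gamma_2)}_{N}f(\bm{x}_1,\bm{x}_2)=\frac{1}{N}\sum_{n=1}^{N}\big(\tau_{\gamma_2(n)}f(\bm{x}_1-\gamma_1(n),\cdot)\big)(\bm{x}_2), \]
where $\tau_{\bm{v}}$ denotes translation by $\bm{v}$ on $\mathbb{Z}^{d_2}$, Minkowski's inequality in $\ell^q_{\bm{x}_2}$ combined with the isometry $\|\tau_{\bm{v}}g\|_{\ell^q}=\|g\|_{\ell^q}$ gives, for each fixed $\bm{x}_1$,
\[ \big\|\mathcal{A}^{(\gamma_1,\gamma_2)}_{N}f(\bm{x}_1,\cdot)\big\|_{\ell^q_{\bm{x}_2}}\leq\frac{1}{N}\sum_{n=1}^{N}\big\|f(\bm{x}_1-\gamma_1(n),\cdot)\big\|_{\ell^q_{\bm{x}_2}}=\mathcal{A}^{\gamma_1}_{N}\widetilde{f}(\bm{x}_1), \]
where $\widetilde{f}(\bm{y}_1):=\|f(\bm{y}_1,\cdot)\|_{\ell^q_{\bm{x}_2}}$ is a function on $\mathbb{Z}^{d_1}$.

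Taking $\ell^q_{\bm{x}_1}$-norms of both sides and applying the hypothesised bound for $\mathcal{A}^{\gamma_1}_{N}$ then yields
\[ \big\|\mathcal{A}^{(\gamma_1,\gamma_2)}_{N}f\big\|_{\ell^q(\mathbb{Z}^{d_1+d_2})}\leq\big\|\mathcal{A}^{\gamma_1}_{N}\widetilde{f}\big\|_{\ell^q(\mathbb{Z}^{d_1})}\leq\big\|\mathcal{A}^{\gamma_1}_{N}\big\|_{\ell^p\to\ell^q}\,\|\widetilde{f}\|_{\ell^p(\mathbb{Z}^{d_1})}, \]
and the proof of the first asserted inequality is completed by the observation that $q\geq p$ forces the $\ell^q_{\bm{x}_2}$-norm to be dominated by the $\ell^p_{\bm{x}_2}$-norm, so that $\|\widetilde{f}\|_{\ell^p(\mathbb{Z}^{d_1})}^p=\sum_{\bm{y}_1}\|f(\bm{y}_1,\cdot)\|_{\ell^q_{\bm{x}_2}}^p\leq\sum_{\bm{y}_1}\|f(\bm{y}_1,\cdot)\|_{\ell^p_{\bm{x}_2}}^p=\|f\|_{\ell^p(\mathbb{Z}^{d_1+d_2})}^p$. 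The statement with $\gamma_2$ in place of $\gamma_1$ follows identically after interchanging the two blocks of coordinates.

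For the restricted weak-type norms the same idea works, but since $\widetilde{\mathbf{1}_E}$ is no longer an indicator I would instead argue directly with the bilinear form. Slicing over $\bm{x}_1$ gives
\[ \langle\mathcal{A}^{(\gamma_1,\gamma_2)}_{N}\mathbf{1}_E,\mathbf{1}_F\rangle=\frac{1}{N}\sum_{\bm{x}_1}\sum_{n=1}^{N}\big|(E_{\bm{x}_1-\gamma_1(n)}+\gamma_2(n))\cap F_{\bm{x}_1}\big|, \]
with $E_{\bm{y}_1}:=\{\bm{y}_2:(\bm{y}_1,\bm{y}_2)\in E\}$ and likewise for $F$. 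Decomposing $E$ and $F$ into dyadic pieces according to the size of the slices — say $\mathcal{E}_j:=\{\bm{y}_1:|E_{\bm{y}_1}|\sim 2^j\}$ and $\mathcal{F}_k:=\{\bm{x}_1:|F_{\bm{x}_1}|\sim 2^k\}$, so that $|E|\sim\sum_j 2^j|\mathcal{E}_j|$ and $|F|\sim\sum_k 2^k|\mathcal{F}_k|$ — and estimating each slice-intersection trivially by $\min(2^j,2^k)$, one reduces to
\[ \langle\mathcal{A}^{(\gamma_1,\gamma_2)}_{N}\mathbf{1}_E,\mathbf{1}_F\rangle\lesssim\sum_{j,k}2^{\min(j,k)}\langle\mathcal{A}^{\gamma_1}_{N}\mathbf{1}_{\mathcal{E}_j},\mathbf{1}_{\mathcal{F}_k}\rangle\lesssim\sum_{j,k}2^{\min(j,k)}|\mathcal{E}_j|^{1/p}|\mathcal{F}_k|^{1/q'} \]
after invoking the restricted weak-type bound for $\mathcal{A}^{\gamma_1}_{N}$. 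The remaining double sum is then summed by a Schur-type argument: because $q\geq p$ the kernel $2^{\min(j,k)-j/p-k/q'}$ decays geometrically in $j+k$, so summing in $k$ and then in $j$ (via Hölder's inequality) recovers $|E|^{1/p}|F|^{1/q'}$ up to a constant depending only on $p$ and $q$. I expect this last accounting — turning the dyadic decomposition back into the clean restricted weak-type inequality — to be the only mildly delicate point; everything else is a routine use of Minkowski's inequality and of the monotonicity $\|\cdot\|_{\ell^q}\leq\|\cdot\|_{\ell^p}$, the latter being precisely where the hypothesis $q\geq p$ enters in the strong-norm case.
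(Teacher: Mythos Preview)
Your strong-type argument is exactly what the paper intends: the proof there is omitted with the remark that it follows from ``Minkowski's inequality and the nesting of the $\ell^p$ norms,'' and your slicing via Minkowski in $\ell^q_{\bm{x}_2}$ followed by the embedding $\ell^p \hookrightarrow \ell^q$ is precisely that.

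For the restricted weak-type case your dyadic reduction to indicators is sound, but the claim that the kernel $K(j,k)=2^{\min(j,k)-j/p-k/q'}$ ``decays geometrically in $j+k$'' is false: on the diagonal $j=k$ one has $K(j,j)=2^{-j(1/p-1/q)}$, which is identically $1$ when $p=q$. The correct observation is that $K$ decays geometrically in $|j-k|$ --- at rate $1/q'$ for $j\leq k$ and rate $1/p$ for $j>k$ --- so for $1<p\leq q<\infty$ it is dominated by an $\ell^1$ convolution kernel in $j-k$. Young's inequality then gives
\[ \sum_{j,k}K(j,k)\,a_jb_k \lesssim \|a\|_{\ell^p}\|b\|_{\ell^{p'}} \leq \|a\|_{\ell^p}\|b\|_{\ell^{q'}} \]
(the last step using $q'\leq p'$), which with $a_j=(2^j|\mathcal{E}_j|)^{1/p}$ and $b_k=(2^k|\mathcal{F}_k|)^{1/q'}$ is the bound you need. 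The endpoint cases require minor separate treatment. So your approach works, just not for the reason you stated.
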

The lemma is an easy consequence of Minkowski's inequality and the nesting of the $\ell^p$ norms, and thus we omit the proof.\par
An immediate consequence of Lemma \ref{lemma_affine_transformations} and Lemma \ref{lemma_projections} is therefore that it will suffice:
\begin{itemize}
\item to prove case \textit{(\ref{main_case_1})} of Theorem \ref{main_theorem} in the case $\gamma(n) = P(n)$ a univariate polynomial with $\deg P \geq 2$;
\item to prove case \textit{(\ref{main_case_2})} of Theorem \ref{main_theorem} in the case $\gamma(n) = (n,P(n))$ with $P$ a univariate polynomial with $\deg P \geq 2$.
\item to prove case \textit{(\ref{main_case_3})} of Theorem \ref{main_theorem} in the case $\gamma(n) = (n,n^2,n^3)$. 
\end{itemize}
\subsection{Estimates using Hausdorff-Young}\label{section_HY_argument}
The operators $\mathcal{A}_N$ are convolution operators; in particular, if we let 
\[ \mu_N := \frac{1}{N} \sum_{n=1}^{N} \delta_{\gamma(n)} \]
we have explicitly $\mathcal{A}_N f = f \ast \mu_N$. When $p$ is of the form $\frac{4s}{2s+1}$ (for $s \geq 1/2$), so that $1/p - 1/{p'} = 1/ 2s$, one can then argue as follows: 
\begin{align*}
\|\mathcal{A}_n f\|_{\ell^{p'}(\mathbb{Z}^d)} &= \|f \ast \mu_N \|_{\ell^{p'}(\mathbb{Z}^d)} \leq \| \widehat{f} \cdot \widehat{\mu_N} \|_{L^{p}(\mathbb{T}^d)} \\
& \leq \| \widehat{f}\|_{L^{p'}(\mathbb{T}^d)} \|\widehat{\mu_N}\|_{L^{2s}(\mathbb{T}^d)} \leq \|f\|_{\ell^{p}(\mathbb{Z}^d)} \|\widehat{\mu_N}\|_{L^{2s}(\mathbb{T}^d)},
\end{align*}
where we have used the Hausdorff-Young inequality twice\footnote{Once for the Fourier transform and once for its inverse.}. A bound for $\|\widehat{\mu_N}\|_{L^{2s}(\mathbb{T}^d)}$ will then result in an $\ell^p$-improving inequality of the form \eqref{eq:generic_improving_estimate}. By a standard orthogonality calculation one notices that when $s$ is integer $N^{2s} \|\widehat{\mu_N}\|_{L^{2s}(\mathbb{T}^d)}^{2s}$ coincides with the number of solutions with $n_j,m_j$ in $[1,N]$ to the system of $d$ diophantine equations 
\begin{equation}
\gamma(n_1) + \ldots + \gamma(n_s) = \gamma(m_1) + \ldots + \gamma(m_s);  \label{eq:generic_diophantine_system}
\end{equation}
a bound for such number will then result in an $\ell^p$-improving inequality as well.\par
When $\gamma = \Gamma$ the moment curve, the system of equations \eqref{eq:generic_diophantine_system} is the so-called Vinogradov diophantine system and it was shown in \cite{BourgainDemeterGuth,Wooley} that the number of solutions is $\lesssim_{\epsilon} N^{\epsilon}(N^s + N^{2s - D_{\Gamma}})$. Picking the critical value $s = D_{\Gamma}$ one obtains by the above argument, with $p_0 = 4D_{\Gamma} / (2D_{\Gamma} + 1)$ as before,
\begin{equation}
\|\mathcal{A}^{\Gamma}_{N}f\|_{\ell^{p_0'}(\mathbb{Z}^d)} \lesssim_{\epsilon} N^{-1/2 + \epsilon} \|f\|_{\ell^{p_0}(\mathbb{Z}^d)},  \label{eq:HKLMY_endpoint}
\end{equation}
which can be verified to be optimal. This is how the $\ell^p$-improving result in \cite{HanKovacLaceyMadridYang} was obtained.\par
We point out however that the argument can actually yield a little more than the above: indeed, by using Hausdorff-Young only once, we have for $q = 2s/(s-1)$
\begin{align*}
\|\mathcal{A}_N f\|_{\ell^{2s/(s-1)}(\mathbb{Z}^d)} & \leq \| \widehat{f} \cdot \widehat{\mu_N}\|_{L^{2s/(s+1)}(\mathbb{T}^d)} \\
& \leq \|\widehat{f}\|_{L^2(\mathbb{T}^d)} \|\widehat{\mu_N}\|_{L^{2s}(\mathbb{T}^d)} = \|f\|_{\ell^2(\mathbb{Z}^d)} \|\widehat{\mu_N}\|_{L^{2s}(\mathbb{T}^d)};
\end{align*}
when $\gamma=\Gamma$ and $s=D_{\Gamma}$ we obtain thus
\begin{equation}
\|\mathcal{A}^{\Gamma}_N f\|_{\ell^{2D_{\Gamma}/(D_{\Gamma}-1)}(\mathbb{Z}^d)} \lesssim_{\epsilon} N^{-1/2+\epsilon} \|f\|_{\ell^2(\mathbb{Z}^d)},  \label{eq:HY_critical_line_estimate}
\end{equation}
which implies estimate \eqref{eq:HKLMY_endpoint} by interpolation with its own dual. It can be verified that estimate \eqref{eq:HY_critical_line_estimate} is not only optimal but it is on the critical line $D/q = (D-1)/p$ (see \eqref{eq:description_of_supercritical_regime} and Figure \ref{fig:regimes_riesz_diagram}). This allows one to interpolate also with the trivial $\ell^p \to \ell^\infty$ and $\ell^1 \to \ell^q$ inequalities, thus proving Conjecture \ref{main_conjecture} not only in a subset of the supercritical regime but also in a subset of the subcritical one. If one further interpolates these inequalities with \eqref{eq:7/4->7/3_estimate} of Theorem \ref{main_theorem} (when $d\geq 3$) the range obtained is as in Figures \ref{fig:range_moment_curve}, \ref{fig:range_moment_curve_zoom}.
%
%%%%%%%%%%%%%%%%%%%%
% FIGURE 2 - Riesz diagram for moment curve, currently known range
%%%%%%%
%
\begin{figure}[ht]
\centering
\begin{tikzpicture}[line cap=round,line join=round,>=Stealth,x=1cm,y=1cm, scale=7]
\clip(-0.3,-0.11) rectangle (1.3,1.1);
\filldraw[line width=0.5pt,color=black,fill=black,fill opacity=0.1] (0,0) -- (13/24,11/24) -- (1,1) -- cycle;
\filldraw[line width=0.5pt,color=black,fill=black,fill opacity=0.1] (0,0) -- (0.5,5/12) -- (7/12,0.5) -- (1,1) -- cycle;
\filldraw[line width=0.5pt,color=black,fill=black,fill opacity=0.1] (0,0) -- (0.5,5/12) -- (1,0) -- cycle;
\filldraw[line width=0.5pt,color=black,fill=black,fill opacity=0.1] (0.5,5/12) -- (4/7,3/7) -- (7/12,0.5) -- (1,0) -- cycle;
\filldraw[line width=0.5pt,color=black,fill=black,fill opacity=0.1] (1,1) -- (7/12,0.5) -- (1,0) -- cycle;
\draw [->,line width=0.5pt] (0,0) -- (1.1,0);
\draw [->,line width=0.5pt] (0,0) -- (0,1.1);
\draw [line width=0.5pt,dash pattern=on 1pt off 1pt] (1,0)-- (0,1);
\draw [line width=0.5pt,dash pattern=on 1pt off 1pt] (0,0)-- (6/11,5/11);
\draw [line width=0.5pt,dash pattern=on 1pt off 1pt] (1,1)-- (6/11,5/11);
\draw [line width=0.5pt,color=black] (0,0)-- (13/24,11/24);
\draw [line width=0.5pt,color=black] (13/24,11/24)-- (1,1);
\draw [line width=0.5pt,color=black] (1,1)-- (0,0);
\draw [line width=0.5pt,color=black] (0,0)-- (0.5,5/12);
\draw [line width=0.5pt,color=black] (0.5,5/12)-- (7/12,0.5);
\draw [line width=0.5pt,color=black] (7/12,0.5)-- (1,1);
\draw [line width=0.5pt,color=black] (1,1)-- (0,0);
\draw [line width=0.5pt,color=black] (0,0)-- (0.5,5/12);
\draw [line width=0.5pt,color=black] (0.5,5/12)-- (1,0);
\draw [line width=0.5pt,color=black] (1,0)-- (0,0);
\draw [line width=0.5pt,color=black] (0.5,5/12)-- (4/7,3/7);
\draw [line width=0.5pt,color=black] (4/7,3/7)-- (7/12,0.5);
\draw [line width=0.5pt,color=black] (7/12,0.5)-- (1,0);
\draw [line width=0.5pt,color=black] (1,0)-- (0.5,5/12);
\draw [line width=0.5pt,color=black] (1,1)-- (7/12,0.5);
\draw [line width=0.5pt,color=black] (7/12,0.5)-- (1,0);
\draw [line width=0.5pt,color=black] (1,0)-- (1,1);
\draw [line width=0.5pt] (1,0)-- (1,-0.02);
\draw [line width=0.5pt] (0,1)-- (-0.02,1);
\draw (1,-0.04) node[anchor=north] {$1$};
\draw (0.5,-0.02) node[anchor=north] {$1/p$};
\draw (-0.02,0.5) node[anchor=east] {$1/q$};
\draw (0.15,0.85) node[anchor=south west] {$q=p'$};
%\draw (0.623,0.541) node[anchor=north west] {$\Big(\frac{D+1}{2D}, \frac{1}{2}\Big)$};
%\draw (0.465,0.477) node[anchor=north west] {$\Big(\frac{1}{2},\frac{D-1}{2D}\Big)$};
%\draw (0.52,0.55) node[anchor=north west] {$\Big(\frac{2D+1}{4D},\frac{2D-1}{4D}\Big)$};
%\draw (0.52,0.47) node[anchor=north west] {$\Big(\frac{D}{2D-1},\frac{D-1}{2D-1}\Big)$};
%\draw (0.59,0.48) node[anchor=north west] {$\Big(\frac{4}{7},\frac{3}{7}\Big)$};
%\draw [->,line width=0.5pt] (0.48,0.43) -- (0.5,5/12);
%\draw [->,line width=0.5pt] (0.54,0.44) -- (6/11,5/11);
%\draw [->,line width=0.5pt] (0.54,0.5) -- (13/24,11/24);
%\draw [->,line width=0.5pt] (0.62,0.5) -- (7/12,0.5);
%\draw [->,line width=0.5pt] (0.595,0.435) -- (4/7,3/7);
\draw (-0.03,1) node[anchor=east] {$1$};
%\begin{scriptsize}
%\draw [fill=black] (13/24,11/24) circle (0.5pt);
%\draw [fill=black] (6/11,5/11) circle (0.5pt);
%\draw [fill=black] (7/12,0.5) circle (0.5pt);
%\draw [fill=black] (4/7,3/7) circle (0.5pt);
%\draw [fill=black] (0.5,5/12) circle (0.5pt);
%\end{scriptsize}
\end{tikzpicture}
\caption{\footnotesize The shaded area corresponds to the range of optimal estimates for the moment curve $\Gamma$ ($d\geq 3$) obtained by interpolating estimate \eqref{eq:HY_critical_line_estimate}, its dual and estimate \eqref{eq:7/4->7/3_estimate} with the trivial ones.} \label{fig:range_moment_curve}
\end{figure}
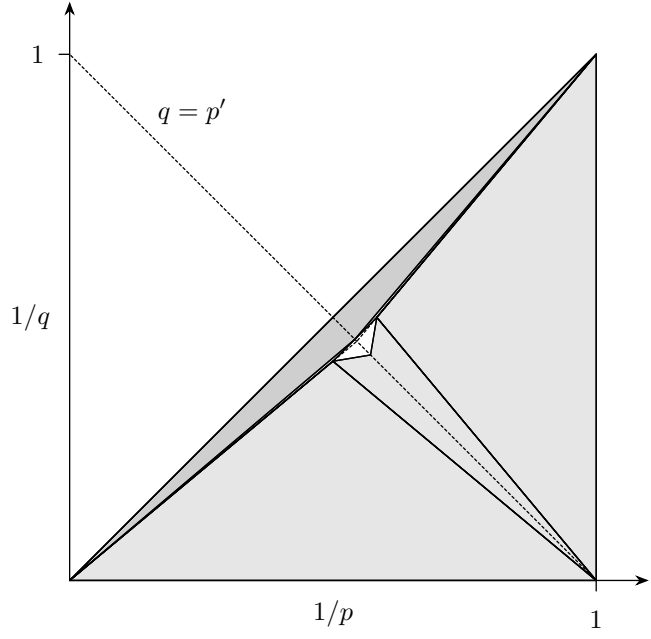
%
%%%%%%%%%%%%%%%%%
%
%
\begin{figure}[ht]
\centering
\begin{tikzpicture}[line cap=round,line join=round,>=Stealth,x=1cm,y=1cm, scale=55]
\clip(0.47,0.41) rectangle (0.62,0.51);
\filldraw[line width=0.5pt,color=black,fill=black,fill opacity=0.1] (0,0) -- (13/24,11/24) -- (1,1) -- cycle;
\filldraw[line width=0.5pt,color=black,fill=black,fill opacity=0.1] (0,0) -- (0.5,5/12) -- (7/12,0.5) -- (1,1) -- cycle;
\filldraw[line width=0.5pt,color=black,fill=black,fill opacity=0.1] (0,0) -- (0.5,5/12) -- (1,0) -- cycle;
\filldraw[line width=0.5pt,color=black,fill=black,fill opacity=0.1] (0.5,5/12) -- (4/7,3/7) -- (7/12,0.5) -- (1,0) -- cycle;
\filldraw[line width=0.5pt,color=black,fill=black,fill opacity=0.1] (1,1) -- (7/12,0.5) -- (1,0) -- cycle;
\draw [->,line width=0.5pt] (0,0) -- (1.1,0);
\draw [->,line width=0.5pt] (0,0) -- (0,1.1);
\draw [line width=0.5pt,dash pattern=on 1pt off 1pt] (1,0)-- (0,1);
\draw [line width=0.5pt,dash pattern=on 1pt off 1pt] (0,0)-- (6/11,5/11);
\draw [line width=0.5pt,dash pattern=on 1pt off 1pt] (1,1)-- (6/11,5/11);
\draw [line width=0.5pt,color=black] (0,0)-- (13/24,11/24);
\draw [line width=0.5pt,color=black] (13/24,11/24)-- (1,1);
\draw [line width=0.5pt,color=black] (1,1)-- (0,0);
\draw [line width=0.5pt,color=black] (0,0)-- (0.5,5/12);
\draw [line width=0.5pt,color=black] (0.5,5/12)-- (7/12,0.5);
\draw [line width=0.5pt,color=black] (7/12,0.5)-- (1,1);
\draw [line width=0.5pt,color=black] (1,1)-- (0,0);
\draw [line width=0.5pt,color=black] (0,0)-- (0.5,5/12);
\draw [line width=0.5pt,color=black] (0.5,5/12)-- (1,0);
\draw [line width=0.5pt,color=black] (1,0)-- (0,0);
\draw [line width=0.5pt,color=black] (0.5,5/12)-- (4/7,3/7);
\draw [line width=0.5pt,color=black] (4/7,3/7)-- (7/12,0.5);
\draw [line width=0.5pt,color=black] (7/12,0.5)-- (1,0);
\draw [line width=0.5pt,color=black] (1,0)-- (0.5,5/12);
\draw [line width=0.5pt,color=black] (1,1)-- (7/12,0.5);
\draw [line width=0.5pt,color=black] (7/12,0.5)-- (1,0);
\draw [line width=0.5pt,color=black] (1,0)-- (1,1);
\draw [line width=0.5pt] (1,0)-- (1,-0.01);
\draw [line width=0.5pt] (0,1)-- (-0.01,1);
\draw (1,-0.02) node[anchor=north west] {\tiny $1$};
\draw (0.5,-0.01) node[anchor=north west] {\tiny $1/p$};
\draw (-0.1,0.5) node[anchor=north west] {\tiny $1/q$};
\draw (0.1,0.9) node[anchor=north west] {\tiny $q=p'$};
\draw (0.593,0.47) node[anchor=north] {\tiny $\Big(\frac{D+1}{2D}, \frac{1}{2}\Big)$};
\draw (0.494,0.44) node[anchor=south] {\tiny $\Big(\frac{1}{2},\frac{D-1}{2D}\Big)$};
\draw (0.54,0.49) node[anchor=south] {\tiny $\Big(\frac{1}{p_0}, \frac{1}{{p_0}'}\Big) = \Big(\frac{2D+1}{4D},\frac{2D-1}{4D}\Big)$};
%\draw (0.542,0.44) node[anchor=north] {\tiny $\Big(\frac{1}{p_c}, \frac{1}{p'_c}\Big) =\Big(\frac{D}{2D-1},\frac{D-1}{2D-1}\Big)$};
\draw (0.595,0.435) node[anchor=west] {\tiny $\Big(\frac{4}{7},\frac{3}{7}\Big)$};
\draw [->,line width=0.5pt] (0.494,0.44) -- (0.5,5/12);
%\draw [->,line width=0.5pt] (0.542,0.44) -- (6/11,5/11);
\draw [->,line width=0.5pt] (0.54,0.49) -- (13/24,11/24);
\draw [->,line width=0.5pt] (0.593,0.47) -- (7/12,0.5);
\draw [->,line width=0.5pt] (0.595,0.435) -- (4/7,3/7);

\draw (0.554,0.421) node[anchor=north] {\tiny $\Big(\frac{1}{p_c}, \frac{1}{{p_c}'}\Big) =\Big(\frac{D}{2D-1},\frac{D-1}{2D-1}\Big)$};
\draw [->,line width=0.5pt] (0.555,0.421) -- (6/11,5/11);

\draw (-0.05,1) node[anchor=north west] {\tiny $1$};
%\begin{scriptsize}
\draw [fill=black] (13/24,11/24) circle (0.03pt);
\draw [fill=black] (6/11,5/11) circle (0.03pt);
\draw [fill=black] (7/12,0.5) circle (0.03pt);
\draw [fill=black] (4/7,3/7) circle (0.03pt);
\draw [fill=black] (0.5,5/12) circle (0.03pt);
%\end{scriptsize}
\end{tikzpicture}
\caption{\footnotesize Zooming in on the region of Figure \ref{fig:range_moment_curve} near the critical point $\Big(\frac{D}{2D-1}, \frac{D-1}{2D-1}\Big)$.} \label{fig:range_moment_curve_zoom}
\end{figure}
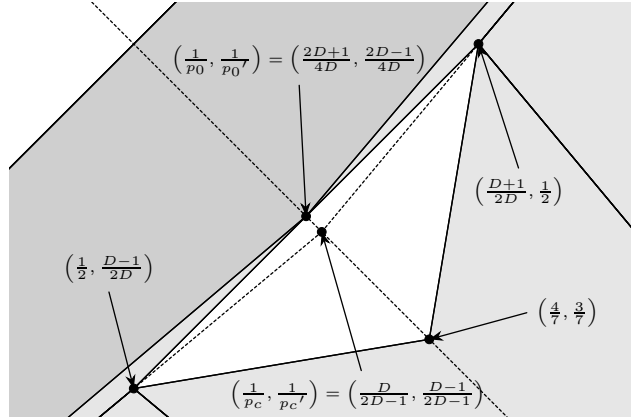
%
%%%%%%%%%%%%
%
\par
It is conjectured from standard number-theoretical arguments that for a generic diophantine system of the form \eqref{eq:generic_diophantine_system} the number of solutions in $[1,N]^{2s}$ is controlled by the generalisation of the Vinogradov bound above, that is
\begin{equation}
\begin{aligned}
J_{s,\gamma}(N) := |\{& (n_1,\ldots,n_s, m_1,\ldots,m_s) \in [1,N]^{2s} : \\
& \gamma(n_1) + \ldots +\gamma(n_s) = \gamma(m_1) + \ldots + \gamma(m_s)\}| \\
& \hspace{10em}\lesssim_{\epsilon} N^{\epsilon} (N^s + N^{2s-D_{\gamma}}).
\end{aligned} \label{eq:conjectured_number_of_solutions}
\end{equation}
It is easily verified that if \eqref{eq:conjectured_number_of_solutions} holds for a certain $\gamma, s$ then the $\ell^2 \to \ell^{2s/(s-1)}$ bound obtained by the argument above is optimal (although it will only be critical if $s = D_{\gamma}$).\par
We record in Table \ref{table_estimates} below a list of optimal estimates for a few examples, obtained from known sharp number-theory estimates for $\|\widehat{\mu_N}\|_{L^{2s}(\mathbb{T}^d)}$ (some in the form \eqref{eq:conjectured_number_of_solutions}, as indicated).
\begin{table}[ht]
\centering
\begin{tabular}{|c|c|c|c|}
\hline
$\gamma(n)$ & $s$ & number-theory estimate & $\ell^2 \to \ell^q$ estimate \\
\hline
$n^3$ & 4 & $J_{s,\gamma}(N)\lesssim_\epsilon N^{5+\epsilon}$ (\cite{Vaughan86}) & $\|\mathcal{A}_N \|_{\ell^2(\mathbb{Z}) \to \ell^{8/3}(\mathbb{Z})} \lesssim_\epsilon N^{-3/8 + \epsilon}$ \\
\hline
$n^3$ & 2 & $J_{s,\gamma}(N)\lesssim_\epsilon N^{2+\epsilon}$  & $\|\mathcal{A}_N \|_{\ell^2(\mathbb{Z}) \to \ell^{4}(\mathbb{Z})} \lesssim_\epsilon N^{-1/2 + \epsilon}$ \\
\hline
$n^4$ & $\frac{23}{3}$ & $\|\widehat{\mu_N}\|_{L^{46/3}(\mathbb{T})} \lesssim_\epsilon N^{-6/23+\epsilon}$ (\cite{Wooley_nested}) & $\|\mathcal{A}_N \|_{\ell^2(\mathbb{Z}) \to \ell^{23/10}(\mathbb{Z})} \lesssim_\epsilon N^{-6/23 + \epsilon}$ \\
\hline
$n^5$ & $\frac{23}{2}$ & $\|\widehat{\mu_N}\|_{L^{23}(\mathbb{T})} \lesssim_\epsilon N^{-5/23+\epsilon}$ (\cite{Wooley_nested}) & $\|\mathcal{A}_N \|_{\ell^2(\mathbb{Z}) \to \ell^{46/21}(\mathbb{Z})} \lesssim_\epsilon N^{-5/23 + \epsilon}$ \\
\hline
$(n,n^3)$ & $\frac{9}{2}$ & $\|\widehat{\mu_N}\|_{L^9(\mathbb{T}^2)}\lesssim_\epsilon N^{-4/9+\epsilon}$ (\cite{Wooley_cubic}) & $\|\mathcal{A}_N \|_{\ell^2(\mathbb{Z}^2) \to \ell^{18/7}(\mathbb{Z}^2)} \lesssim_\epsilon N^{-4/9 + \epsilon}$ \\
\hline
\end{tabular}
\caption{Some optimal $\ell^2 \to \ell^q$ estimates obtained by the H-Y argument.} \label{table_estimates}
\vspace{-7mm}
\end{table}
\begin{remark}
It is clear that such simple Hausdorff-Young arguments can never give the critical endpoint of Conjecture \ref{main_conjecture}, even when using the strongest number-theory estimates available.
\end{remark}
\section{Arithmetic method of refinements for discrete curves}\label{section_method_of_refinements_setup}
As anticipated, our plan is to adapt the method of refinements of Christ (\cite{Christ99}) to our arithmetic setting and apply it to inequalities of the form \eqref{eq:generic_improving_estimate} (or rather, their restricted weak-type versions). In this section we will develop the setup and use it to reduce the proof of Theorem \ref{main_theorem} to certain number-theory estimates for systems of diophantine equations. We will find it more convenient to work with the un-normalised averages from now on, so we define
\[ \mathscr{A}^{\gamma}_N f(\bm{x}) := N \mathcal{A}^{\gamma}_{N} f(\bm{x}) = \sum_{n = 1}^{N} f(\bm{x} - \gamma(n)). \]
${\mathscr{A}_N}^{\ast}$ will denote the adjoint of $\mathscr{A}_N$ instead.
\subsection{Combinatorial reformulation of the estimates}\label{section_reformulation}
In order to apply the method of refinements to our problem we need to first reformulate the desired restricted weak-type estimates in an equivalent combinatorial fashion as follows.\par
If we let $E,F$ denote finite subsets of $\mathbb{Z}^d$, the restricted weak-type version of the conjectured inequality \eqref{eq:conjectured_estimate} is
\[ \langle \mathscr{A}_N \mathbf{1}_E, \mathbf{1}_F \rangle \lesssim_{\epsilon} N^{\epsilon} (N^{1-D(1/p - 1/q)} + N^{1/q} + N^{1/{p'}}) |E|^{1/p}|F|^{1/{q'}}. \]
If we introduce the quantities 
\[ \alpha := \frac{\langle \mathscr{A}_N \mathbf{1}_E, \mathbf{1}_F \rangle}{|F|}, \qquad \beta := \frac{\langle {\mathscr{A}_N}^{\ast}\mathbf{1}_F, \bm{1}_{E} \rangle}{|E|}, \]
and let $\frac{1}{r} := \frac{1}{p} - \frac{1}{q}$, we see after some calculations that the restricted weak-type inequality above in the supercritical regime is equivalently rewritten as 
\[ \alpha^{r/{q'}} \beta^{r/q} \lesssim_{\epsilon} N^{r-D+\epsilon} |E|. \]
If the exponents $p,q$ are in the subcritical regime instead, then the (conjectured) restricted weak-type inequality is equivalently rewritten as 
\[ \alpha^{r/{q'}} \beta^{r/q} \lesssim_{\epsilon} N^{\epsilon} (N^{r/q} + N^{r/{p'}}) |E|. \]
If we take $p = 2 - \frac{1}{k+1}$ for some $k < D$ and $q = p'$ the latter becomes in particular
\begin{equation}
\alpha^{k+1} \beta^{k} \lesssim_{\epsilon} N^{k+\epsilon} |E| \label{eq:refinement_inequality_subcritical_p_q}
\end{equation}
(which is in the subcritical regime if $k< D-1$ and is the critical endpoint estimate if $k = D-1$). The estimates in Theorem \ref{main_theorem} are precisely of this form.
\begin{remark}\label{remark_trivial_bound}
Observe that since we are considering only characteristic functions we have always $\alpha,\beta \leq N$.
\end{remark}
This reformulation of the desired inequalities has turned our task into that of proving suitable lower bounds for $|E|$ in terms of $\alpha,\beta, N$. The machinery developed in this Section will serve to achieve precisely this.
\subsection{Refinements of the sets  $E,F$}
The lemma presented in this subsection is well-known and a number of presentations exist in the literature -- it first appeared in \cite{Christ99}.\par
Let $E,F$ be finite subsets of $\mathbb{Z}^d$ and $\alpha,\beta$ be as defined in Section \ref{section_reformulation}. Observe that if we let
\begin{equation*}
F_1 := \big\{ \bm{x} \in F : \mathscr{A}_N \mathbf{1}_E(\bm{x}) > \frac{\alpha}{2} \big\}
\end{equation*}
then we have
\begin{equation}\label{eq:F_1_condition_ii} \langle \mathscr{A}_N \bm{1}_E, \bm{1}_{F_1} \rangle \geq \frac{1}{2}\langle \mathscr{A}_N \bm{1}_{E}, \bm{1}_{F} \rangle. 
\end{equation}
Indeed, one sees that  
\[ \langle \mathscr{A}_N \bm{1}_{E}, \bm{1}_{F\backslash F_1}\rangle \leq \frac{\alpha}{2}|F| = \frac{1}{2}\langle \mathscr{A}_N \bm{1}_{E}, \bm{1}_{F} \rangle, \] 
from which \eqref{eq:F_1_condition_ii} follows at once. Notice that in particular we have that $F_1 \neq \emptyset$. The observation extends easily to show that we can define iteratively (with $E_0 = E, F_0 = F$)
\begin{align*}
F_{j} &:= \big\{\bm{x} \in F_{j-1} : \mathscr{A}_N \mathbf{1}_{E_{j-1}}(\bm{x}) \gtrsim_j \alpha \big\}, \\
E_{j} &:= \big\{\bm{y} \in E_{j-1} : {\mathscr{A}_{N}}^{\ast}\mathbf{1}_{F_{j}}(\bm{y}) \gtrsim_j \beta \big\},
\end{align*}
for implicit constants decreasing sufficiently fast and obtain a sequence of sets as per the following lemma.
\begin{lemma}[Refinement lemma]\label{lemma_refinements}
Given $E, F$ finite subsets of $\mathbb{Z}^d$, there exists a sequence of subsets $E_j \subseteq E_0 := E$ and $F_j \subseteq F_0:= F$ such that we have for every $j$
\begin{enumerate}[(i)]
\item $F_j \subseteq F_{j-1}$;
\item \label{alpha_pointwise_lowerbound}for every $\bm{x} \in F_j$ we have $\mathscr{A}_N \bm{1}_{E_{j-1}}(\bm{x}) \gtrsim_j \alpha$;
\item \label{mass_preserved_1}$\langle \mathscr{A}_N \bm{1}_{E_{j-1}}, \bm{1}_{F_j}\rangle \gtrsim_j \langle \mathscr{A}_N \bm{1}_{E}, \bm{1}_{F} \rangle$;
\item $E_j \subseteq E_{j-1}$;
\item \label{beta_pointwise_lowerbound}for every $\bm{y} \in E_j$ we have ${\mathscr{A}_N}^{\ast}\bm{1}_{F_j}(\bm{y}) \gtrsim_j \beta$;
\item \label{mass_preserved_2}$\langle {\mathscr{A}_N}^{\ast} \bm{1}_{F_j}, \bm{1}_{E_j}\rangle \gtrsim_j \langle \mathscr{A}_N \bm{1}_{E}, \bm{1}_{F} \rangle$.
\end{enumerate}
\end{lemma}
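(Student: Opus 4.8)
The plan is to build the sequence of sets by induction on $j$, at each stage peeling off a "good" subset according to the size of the relevant average and checking that we lose at most a constant factor in the pairing. The base case is trivial: set $E_0 = E$, $F_0 = F$ and observe that properties \textit{(i)}--\textit{(vi)} at level $j=0$ hold vacuously (or are the definitions). For the inductive step, suppose $E_{j-1}$ and $F_{j-1}$ have been constructed with $\langle \mathscr{A}_N \bm{1}_{E_{j-1}}, \bm{1}_{F_{j-1}} \rangle \gtrsim_{j-1} \langle \mathscr{A}_N \bm{1}_{E}, \bm{1}_{F} \rangle$; this is the only property of the previous level we need to carry forward. The key observation, already spelled out in the excerpt for $j=1$, is the following pigeonholing: if one defines
\[
F_j := \big\{ \bm{x} \in F_{j-1} : \mathscr{A}_N \bm{1}_{E_{j-1}}(\bm{x}) > c_j \alpha \big\}
\]
for a small constant $c_j > 0$ to be fixed, then
\[
\langle \mathscr{A}_N \bm{1}_{E_{j-1}}, \bm{1}_{F_{j-1} \setminus F_j} \rangle \leq c_j \alpha |F| = c_j \langle \mathscr{A}_N \bm{1}_{E}, \bm{1}_{F} \rangle,
\]
using the definition $\alpha = \langle \mathscr{A}_N \bm{1}_E, \bm{1}_F \rangle / |F|$ and the trivial bound $|F_{j-1} \setminus F_j| \le |F|$. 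Combined with the inductive hypothesis, choosing $c_j$ small enough relative to the implicit constant at level $j-1$ gives $\langle \mathscr{A}_N \bm{1}_{E_{j-1}}, \bm{1}_{F_j} \rangle \gtrsim_j \langle \mathscr{A}_N \bm{1}_E, \bm{1}_F \rangle$, which is property \textit{(\ref{mass_preserved_1})}; properties \textit{(i)} and \textit{(\ref{alpha_pointwise_lowerbound})} hold by construction.

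Next I would repeat the same maneuver on the $E$ side, but now pairing against $\bm{1}_{F_j}$ rather than $\bm{1}_{F_{j-1}}$. Define
\[
E_j := \big\{ \bm{y} \in E_{j-1} : {\mathscr{A}_N}^\ast \bm{1}_{F_j}(\bm{y}) > c_j' \beta \big\}
\]
for a suitable small $c_j' > 0$. Here one uses $\beta = \langle {\mathscr{A}_N}^\ast \bm{1}_F, \bm{1}_E \rangle / |E| = \langle \mathscr{A}_N \bm{1}_E, \bm{1}_F \rangle / |E|$ together with the adjoint relation $\langle \mathscr{A}_N \bm{1}_{E_{j-1}}, \bm{1}_{F_j} \rangle = \langle {\mathscr{A}_N}^\ast \bm{1}_{F_j}, \bm{1}_{E_{j-1}} \rangle$ to bound the contribution of $E_{j-1} \setminus E_j$: it is at most $c_j' \beta |E| = c_j' \langle \mathscr{A}_N \bm{1}_E, \bm{1}_F \rangle$, so that by property \textit{(\ref{mass_preserved_1})} (already established at this level) and a small enough choice of $c_j'$ we get $\langle {\mathscr{A}_N}^\ast \bm{1}_{F_j}, \bm{1}_{E_j} \rangle \gtrsim_j \langle \mathscr{A}_N \bm{1}_E, \bm{1}_F \rangle$, which is property \textit{(\ref{mass_preserved_2})}; properties \textit{(iv)} and \textit{(\ref{beta_pointwise_lowerbound})} are immediate from the definition of $E_j$. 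This closes the induction, since property \textit{(\ref{mass_preserved_2})} at level $j$ feeds into the next step exactly as property \textit{(\ref{mass_preserved_1})} did.

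There is essentially no serious obstacle here — the lemma is a standard pigeonholing argument and the excerpt itself notes it "is well-known and a number of presentations exist in the literature." The only point requiring a little care is the bookkeeping of the implicit constants: each $c_j, c_j'$ must be chosen depending on the constant surviving from the previous step, so that the constants in \textit{(\ref{mass_preserved_1})} and \textit{(\ref{mass_preserved_2})} decay (as the excerpt puts it, "for implicit constants decreasing sufficiently fast") but remain positive for each fixed $j$. One should also note the harmless fact that all the sets produced are nonempty whenever $\langle \mathscr{A}_N \bm{1}_E, \bm{1}_F \rangle > 0$, since properties \textit{(\ref{mass_preserved_1})} and \textit{(\ref{mass_preserved_2})} would otherwise be violated; this is what makes the subsequent "flowing" procedure well-defined. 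Beyond that the proof is a routine iteration of the two displayed estimates.
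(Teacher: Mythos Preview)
Your proof is correct and follows exactly the approach the paper sketches before the lemma: define $F_j$ (resp.\ $E_j$) as the subset of $F_{j-1}$ (resp.\ $E_{j-1}$) where the relevant average exceeds a small multiple of $\alpha$ (resp.\ $\beta$), bound the complement's contribution by $c_j\langle \mathscr{A}_N\bm{1}_E,\bm{1}_F\rangle$ using the definitions of $\alpha,\beta$, and iterate with constants chosen small enough at each step. The paper itself omits the proof as easy after displaying the $j=1$ case, and your write-up is precisely the standard completion of that iteration.
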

We omit the easy proof.
\begin{remark}
Ultimately, properties \textit{(\ref{mass_preserved_1})} and \textit{(\ref{mass_preserved_2})} are only needed to show that the sets $E_j, F_j$ are not empty.
\end{remark}
\subsection{Flowing back and forth}\label{subsection_flowing}
Given the refined sets produced by Lemma \ref{lemma_refinements}, we consider now the parameters in which we are averaging.\par
If $\bm{y} \in E_{k}$ we have by \textit{(\ref{beta_pointwise_lowerbound})} of Lemma \ref{lemma_refinements} that ${\mathscr{A}_N}^{\ast}\bm{1}_{F_k}(\bm{y}) \gtrsim_j \beta$; but notice that 
\[ {\mathscr{A}_N}^{\ast}\bm{1}_{F_k}(\bm{y}) = \sum_{n=1}^{N} \bm{1}_{F_k}(\bm{y} + \gamma(n)) = |B^{\bm{y}}| \]
with
\[ B^{\bm{y}} := \{ n_1 \in [1,N] : \bm{y} + \gamma(n_1) \in F_k \}; \]
thus \textit{(\ref{beta_pointwise_lowerbound})} is a statement about the cardinality of the set of parameters $B^{\bm{y}} \subseteq [1,N]$ -- namely the lower bound $|B^{\bm{y}}| \gtrsim \beta$. Now if $\bm{y}\in E_{k}$ and $n_1\in B^{\bm{y}}$ as above we have $\bm{y}+\gamma(n_1) \in F_k$ and therefore we have by \textit{(\ref{alpha_pointwise_lowerbound})} of Lemma \ref{lemma_refinements} that $\mathscr{A}_N \bm{1}_{E_{k-1}}(\bm{y}+\gamma(n_1)) \gtrsim \alpha$; but again 
\[ \mathscr{A}_N \bm{1}_{E_{k-1}}(\bm{y}+\gamma(n_1)) = \sum_{m=1}^{N} \bm{1}_{E_{k-1}}(\bm{y}+\gamma(n_1)-\gamma(m)) =|A^{\bm{y}}_{n_1}| \]
with
\[ A^{\bm{y}}_{n_1} := \{ m_1 \in [1,N] : \bm{y}+\gamma(n_1)-\gamma(m_1) \in E_{k-1} \}, \]
so that \textit{(\ref{alpha_pointwise_lowerbound})} is also a statement about the cardinality of the sets of parameters $A^{\bm{y}}_{n_1} \subseteq [1,N]$ -- namely that $|A^{\bm{y}}_{n_1}|\gtrsim \alpha$ if $n_1 \in B^{\bm{y}}$.\par
We can clearly continue in this fashion and obtain a collection of \emph{slices}
\[ B^{\bm{y}}, A^{\bm{y}}_{n_1}, B^{\bm{y}}_{n_1,m_1}, A^{\bm{y}}_{n_1,m_1, n_2}, \ldots,B^{\bm{y}}_{n_1,m_1,\ldots,n_{k-1},m_{k-1}}, A^{\bm{y}}_{n_1,m_1,\ldots,m_{k-1},n_k}, \]
(each a subset of $[1,N]$ and each parametrised by the previous ones, which results in somewhat cumbersome notation) where we have defined iteratively
\begin{align*}
B^{\bm{y}}_{n_1,m_1,\ldots,n_{j},m_{j}} := \{n_{j+1} \in [1,N] : \bm{y} & +\gamma(n_1)-\gamma(m_1)+ \ldots \\
& + \gamma(n_{j}) - \gamma(m_j) + \gamma(n_{j+1}) \in F_{k-j} \}, \\
A^{\bm{y}}_{n_1,m_1,\ldots,m_j,n_{j+1}} := \{m_{j+1} \in [1,N] : \bm{y} & +\gamma(n_1)-\gamma(m_1)+ \ldots \\
& - \gamma(m_j) + \gamma(n_{j+1}) - \gamma(m_{j+1}) \in E_{k-j-1} \}.
\end{align*}
By Lemma \ref{lemma_refinements}, the slices have the fundamental properties that if one takes the chain of parameters $n_1, m_1, \ldots, n_k, m_k$ such that
\begin{equation}
n_1 \in B^{\bm{y}},\, m_1 \in A^{\bm{y}}_{n_1},\, n_2 \in B^{\bm{y}}_{n_1,m_1},\, \ldots,\, m_k \in A^{\bm{y}}_{n_1,m_1, \ldots, n_k},  \label{eq:chain_of_parameters}
\end{equation}
then we have lower bounds
\[ |B^{\bm{y}}_{n_1,\ldots,m_{j}}| \gtrsim \beta,  \qquad |A^{\bm{y}}_{n_1,\ldots,n_{j+1}}| \gtrsim \alpha, \]
for all $j$ and moreover we have 
\begin{align*} 
\bm{y} \in &\, E_{k}, \\
\bm{y} +\gamma(n_1) \in &\, F_{k}, \\
\bm{y}+\gamma(n_1)-\gamma(m_1) \in &\, E_{k-1}, \\
\vdots \hspace{2em} & \\
\bm{y}+\gamma(n_1)-\gamma(m_1) + \ldots +\gamma(n_k) \in &\, F_1, \\
\bm{y}+\gamma(n_1)-\gamma(m_1) + \ldots +\gamma(n_k)-\gamma(m_{k}) \in &\, E.
\end{align*}
In particular, we see that we are ``flowing'' between $E$ and $F$ with each step.
\subsection{Tower of parameters}\label{subsection_tower_construction}
The parameter slices defined in Section \ref{subsection_flowing} assemble naturally into the structure described below that is at the heart of the method of refinements. We will however prune one of the slices before assembling them, in order to enforce a certain crucial condition.\par
Let $k$ be as in the previous subsection and assume that $\alpha \gg 1$, depending on certain parameters introduced below\footnote{We will see in the proof of Theorem \ref{main_theorem} that this condition can always be enforced.}. We leave all slices undisturbed, safe for the last one, which we redefine to be 
\begin{align*}
\widetilde{A}^{\bm{y}}_{n_1,m_1,\ldots,m_{k-1},n_{k}} &:= A^{\bm{y}}_{n_1,m_1,\ldots,m_{k-1},n_{k}} \backslash \{ m_k \in [1,N] : \\
&\hspace{8em} \gamma(n_1)-\gamma(m_1) + \ldots +\gamma(n_k)-\gamma(m_{k}) = \bm{0} \}.
\end{align*}
Notice that in the set we are removing the variables $n_1, m_1, \ldots, n_k$ are fixed and thus the set consists of the common zeroes of certain univariate polynomials. The set has thus cardinality $\lesssim_{\gamma} 1$ and since $\alpha \gg 1$ we still have 
\[ |\widetilde{A}^{\bm{y}}_{n_1,m_1,\ldots,m_{k-1}, n_k}| \gtrsim \alpha. \]
\par
We will now define iteratively the set $\mathcal{T} \subseteq [1,N]^{2k}$ of sequences of parameters $(n_1,m_1,$ $\ldots,n_k,m_k)$ obtained by flowing back and forth as per Section \ref{subsection_flowing}. The set $\mathcal{T}$ is called the \emph{tower of parameters} and is defined as follows: let $S_1 := B^{\bm{y}}$ and $T_1:= \bigcup_{n_1\in S_1} \{n_1\}\times A^{\bm{y}}_{n_1}$, and let iteratively
\[ S_j := \bigcup_{\bm{t}\in T_{j-1}} \{\bm{t}\}\times B^{\bm{y}}_{\bm{t}}, \qquad
T_j := \bigcup_{\bm{s}\in S_{j}} \{\bm{s}\}\times A^{\bm{y}}_{\bm{s}}, \]
except for $T_k$ where in place of $A^{\bm{y}}_{\bm{s}}$ we use the pruned slice $\widetilde{A}^{\bm{y}}_{\bm{s}}$ instead. Then the tower of parameters is simply $\mathcal{T} := T_{k}$. The elements $(n_1,m_1, \ldots, n_k,m_k)$ of $\mathcal{T}$ are chains that satisfy \eqref{eq:chain_of_parameters}. We will say that $\mathcal{T}$ has been constructed by \emph{flowing $k$ times in each direction}.
\begin{remark}\label{remark_pruned_tower_property}
The pruning has had the effect of enforcing the condition that wherever we follow the flow starting at $\bm{y} \in E_k$ and given by $(n_1,m_1, \ldots, n_k,m_k)$ we never end up back at point $\bm{y}$. In other words, if $(n_1,m_1, \ldots, n_k,m_k) \in \mathcal{T}$ we have 
\[ \bm{y}+\gamma(n_1)-\gamma(m_1) + \ldots +\gamma(n_k)-\gamma(m_{k}) \neq \bm{y}. \]
\end{remark}
Observe that we can provide a lower bound for the cardinality of $\mathcal{T}$ in terms of $\alpha,\beta$: indeed, we have
\[ |\mathcal{T}| = |T_{k}| = \sum_{\bm{s}\in S_{k}} |\widetilde{A}^{\bm{y}}_{\bm{s}}| \gtrsim \alpha |S_k| \quad \text{ and } \quad |S_k| = \sum_{\bm{t}\in T_{k-1}} |B^{\bm{y}}_{\bm{t}}| \gtrsim \beta |T_{k-1}|; \]
iterating all the way to $S_1 = B^{\bm{y}}$, we obtain 
\begin{equation}\label{eq:lowerbound_tower}
|\mathcal{T}| \gtrsim \alpha^{k}\beta^{k}.
\end{equation}

\subsection{Lower bounds for $|E|$ and proof of Theorem \ref{main_theorem}}\label{section_lowerbound_and_proof}
With $\bm{y}$ as above, we now let $\Psi$ denote the map
\begin{equation}
\Psi(n_1,m_1,\ldots,n_k,m_k) := \bm{y}+\gamma(n_1)-\gamma(m_1) + \ldots +\gamma(n_k)-\gamma(m_k). \label{eq:map_Psi}
\end{equation}
The definition of the slices and of $\mathcal{T}$ show that $\Psi(\mathcal{T}) \subseteq E$ by construction, and therefore we have quite simply the lower bound 
\[ |\Psi(\mathcal{T})| \leq |E|. \]
This lower bound is of limited use in this form as in general it is not easy to compute $|\Psi(\mathcal{T})|$. One can however estimate it using the multiplicity of the map $\Psi$ over $\mathcal{T}$ in the following way. Given a mapping $\Phi: [1,N]^{s} \to \mathbb{Z}^d$ and a set $S \subseteq [1,N]^{s}$ we define the \emph{multiplicity} of $\Phi$ over $S$ to be 
\[ m(\Phi;S) := \max_{z \in \Phi(S)} |\Phi^{-1}(\{z\}) \cap S|. \]
If we let  $\mathcal{T}_{\bm{z}} := \Psi^{-1}(\{\bm{z}\}) \cap \mathcal{T}$ for convenience, we then have 
\[  |\Psi(\mathcal{T})| = \sum_{\bm{z} \in \Psi(\mathcal{T})} 1 = \sum_{\bm{z} \in \Psi(\mathcal{T})} \frac{|\mathcal{T}_{\bm{z}}|}{|\mathcal{T}_{\bm{z}}|}
\geq \sum_{\bm{z} \in \Psi(\mathcal{T})} \frac{|\mathcal{T}_{\bm{z}}|}{m(\Psi;\mathcal{T})} = \frac{|\mathcal{T}|}{m(\Psi;\mathcal{T})}, \]
so that we always have the lower bound
\begin{equation}\label{eq:multiplicity_bound_m}
|\mathcal{T}| \leq m(\Psi;\mathcal{T}) |E|.
\end{equation}
\begin{remark}\label{remark_m_interpretation}
Observe that for $\Psi$ as given by \eqref{eq:map_Psi} the quantity $m(\Psi;\mathcal{T})$ is the maximum number of solutions $(n_1, m_1, \ldots, n_k, m_k) \in \mathcal{T}$ to the system of diophantine equations given by 
\[ \gamma(n_1) + \ldots + \gamma(n_k) = (\bm{z} - \bm{y}) + \gamma(m_1) + \ldots + \gamma(m_k), \]
as $\bm{z}$ ranges over $\Psi(\mathcal{T})$ -- and by construction of $\mathcal{T}$, $\bm{z} \neq \bm{y}$. We then see that we are dealing with an inhomogeneous version of \eqref{eq:generic_diophantine_system}, and therefore $m(\Psi;\mathcal{T})$ should be compared with the quantity $J_{k,\gamma}(N)$ as defined in \eqref{eq:conjectured_number_of_solutions}. 
\end{remark}
Combining \eqref{eq:multiplicity_bound_m} and \eqref{eq:lowerbound_tower} one has therefore 
\[ \alpha^k \beta^k \lesssim m(\Psi;\mathcal{T}) |E|, \] 
which since $\alpha \leq N$ implies
\[ \alpha^{k+1} \beta^k \lesssim N\, m(\Psi;\mathcal{T}) |E|. \]
Comparing this with the combinatorial reformulation \eqref{eq:refinement_inequality_subcritical_p_q} one sees that to conclude an optimal subcritical estimate on the $q = p'$ line it is sufficient to show that we can construct a tower $\mathcal{T}$ (obtained by flowing $k$ times in each direction) such that $m(\Psi;\mathcal{T}) \lesssim_\epsilon N^{k-1 + \epsilon}$ can be shown to hold. This is precisely the strategy that we adopt in the proof of Theorem \ref{main_theorem}.
\begin{remark}
Comparing once again the quantities $m(\Psi;\mathcal{T})$ with $J_{k,\gamma}$, we stress the fact that we are looking for an estimate of the form $m(\Psi;\mathcal{T}) \lesssim_\epsilon N^{k-1 + \epsilon}$, whereas, for the corresponding homogeneous system \eqref{eq:generic_diophantine_system}, estimate \eqref{eq:conjectured_number_of_solutions} in this regime takes instead the form $J_{k,\gamma}(N) \lesssim_{\epsilon} N^{k + \epsilon}$ (and this bound clearly cannot be improved because of the presence of diagonal solutions to \eqref{eq:generic_diophantine_system}).
\end{remark}

\begin{proof}[Proof of Theorem \ref{main_theorem}]
By \eqref{eq:refinement_inequality_subcritical_p_q} of Section \ref{section_reformulation} the inequalities \eqref{eq:3/2->3_estimate},\eqref{eq:5/3->5/2_estimate},\eqref{eq:7/4->7/3_estimate} of Theorem \ref{main_theorem} can be reformulated as, respectively,
\begin{align*}
\alpha^2 \beta^{} &\lesssim_{\epsilon} N^{1+\epsilon} |E|, \\
\alpha^3 \beta^2 &\lesssim_{\epsilon} N^{2+\epsilon} |E|, \\
\alpha^4 \beta^3 &\lesssim_{\epsilon} N^{3+\epsilon} |E|.
\end{align*}
We claim that we can always assume that $\alpha, \beta \gg 1$. Indeed, if $\alpha \lesssim 1$ or $\beta \lesssim 1$ we have $\alpha^k \beta^k \lesssim N^k$ for any $k$; but since we see easily\footnote{Ultimately a consequence of the fact that $\gamma$ is essentially injective.} that $\alpha \lesssim |E|$, all the desired inequalities would immediately follow.\par
Assuming then $\alpha, \beta \gg 1$, as anticipated above we proceed to prove the \emph{sharpened}\footnote{Notice indeed that the inequalities as written are false in general, without the additional $\alpha, \beta \gg 1$ assumption.} inequalities
\begin{align*}
\alpha^{} \beta^{} &\lesssim_{\epsilon} N^{\epsilon} |E|, \\
\alpha^2 \beta^2 &\lesssim_{\epsilon} N^{1+\epsilon} |E|, \\
\alpha^3 \beta^3 &\lesssim_{\epsilon} N^{2+\epsilon} |E|,
\end{align*}
from which the desired ones follow immediately since $\alpha \leq N$.\par
We prove these inequalities all at once, conditionally on Lemmata \ref{lemma_number_of_solutions_case_1}, \ref{lemma_number_of_solutions_case_2}, \ref{lemma_number_of_solutions_case_3} which are proven in Section \ref{section_number_of_solutions}. Let $k\in \{1,2,3\}$ and recall that by Section \ref{section_preliminaries_reductions} we can assume $\gamma(n) = P(n)$ with $\deg P \geq 2$ when $k=1$, $\gamma(n) = (n, P(n))$ when $k=2$ and $\gamma(n)=(n,n^2,n^3)$ when $k=3$. For each $k$ build the tower $\mathcal{T}$ as per Section \ref{subsection_tower_construction} by flowing $k$ times in each direction, so that by \eqref{eq:lowerbound_tower} and \eqref{eq:multiplicity_bound_m} we have 
\[ \alpha^k \beta^k \lesssim m(\Psi;\mathcal{T}) |E|, \]
with $\Psi$ given by \eqref{eq:map_Psi}. By construction the tower $\mathcal{T}$ is contained in the set 
\[ \{(n_1,m_1,\ldots, n_k,m_k) \in [1,N]^{2k} : \gamma(n_1) - \gamma(m_1) + \ldots + \gamma(n_k) - \gamma(m_k) \neq 0\}\]
(see Remark \ref{remark_pruned_tower_property}) and therefore $m(\Psi;\mathcal{T})$ is bounded by the maximum number of solutions in $[1,N]^{2k}$ to 
\[ \gamma(n_1) - \gamma(m_1) + \ldots + \gamma(n_k) - \gamma(m_k) = \mathfrak{z} \]
when $\mathfrak{z} \neq 0$ (see Remark \ref{remark_m_interpretation}). By substituting for $\gamma$ the respective special forms for each $k$ we see that
\begin{itemize}
\item when $k=1$, $m(\Psi;\mathcal{T}) \lesssim_\epsilon N^{\epsilon}$ by Lemma \ref{lemma_number_of_solutions_case_1} of Section \ref{section_number_of_solutions};
\item when $k=2$, $m(\Psi;\mathcal{T}) \lesssim_\epsilon N^{1 + \epsilon}$ by Lemma \ref{lemma_number_of_solutions_case_2} of Section \ref{section_number_of_solutions};
\item when $k=3$, $m(\Psi;\mathcal{T}) \lesssim_\epsilon N^{2 + \epsilon}$ by Lemma \ref{lemma_number_of_solutions_case_3} of Section \ref{section_number_of_solutions}.
\end{itemize}
The proof is thus concluded, modulo the proofs of the lemmata which are presented in the next section.
\end{proof}
\section{Bounds for the number of solutions to diophantine systems of equations}\label{section_number_of_solutions}
In this last section we conclude the proof of Theorem \ref{main_theorem} by proving the lemmata for the number of solutions of the relative diophantine equations employed above. Such lemmata are proven by elementary means, ultimately resting on the divisor bound; the proofs are inspired by the corresponding one in \cite{HughesWooley} by the second author and Wooley.\par
The lemmata are ordered by increasing number of equations and increasing number of variables. The proof of the first one already contains in essence the idea for all three proofs.
\begin{lemma}\label{lemma_number_of_solutions_case_1}
For every $\epsilon>0$ the following holds.\\
Let $P \in \mathbb{Z}[X]$ with $\deg P \geq 2$. The number of solutions $(n_1,m_1) \in [1,N]^2$ to 
\begin{equation}
P(n_1) - P(m_1) = \mathfrak{z}_1  \label{eq:diophantine_equation_case_1}
\end{equation}
with $|\mathfrak{z}_1|\lesssim N^{\deg P}$ and $\mathfrak{z}_1 \neq 0$ is bounded by $\lesssim_{\epsilon} N^{\epsilon}$.
\end{lemma}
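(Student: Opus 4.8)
The plan is to factor the polynomial difference and exploit the divisor bound. Write $P(X) = a_D X^D + \ldots + a_1 X + a_0$ with $D = \deg P \geq 2$ and $a_D \neq 0$. The key observation is that $P(n_1) - P(m_1)$ is divisible by $n_1 - m_1$ as a polynomial: indeed $P(n_1) - P(m_1) = (n_1 - m_1) Q(n_1, m_1)$ for some $Q \in \mathbb{Z}[X,Y]$. First I would dispose of the case $n_1 = m_1$: this forces $\mathfrak{z}_1 = 0$, which is excluded, so we may assume $n_1 \neq m_1$, and hence $d := n_1 - m_1$ is a nonzero integer with $1 \leq |d| \leq N$. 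Since $d \mid P(n_1) - P(m_1) = \mathfrak{z}_1$ and $\mathfrak{z}_1 \neq 0$ with $|\mathfrak{z}_1| \lesssim N^{D}$, the number of possible values of $d$ is at most the number of divisors of $\mathfrak{z}_1$, which by the divisor bound is $\lesssim_{\epsilon} N^{\epsilon}$ (using $|\mathfrak{z}_1| \lesssim N^D$, so $\log|\mathfrak{z}_1| \lesssim \log N$).

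Next, for each fixed admissible value of $d = n_1 - m_1$, I would substitute $n_1 = m_1 + d$ into \eqref{eq:diophantine_equation_case_1}. This turns the equation into $P(m_1 + d) - P(m_1) = \mathfrak{z}_1$, which, expanding, is a polynomial equation in the single variable $m_1$ of degree $D - 1 \geq 1$ in $m_1$ (the top-degree terms $a_D m_1^D$ cancel, leaving leading term $a_D D\, d\, m_1^{D-1}$, and since $a_D \neq 0$ and $d \neq 0$ this coefficient is nonzero). A nonzero univariate polynomial of degree $D-1$ has at most $D - 1 \lesssim_P 1$ roots, so for each fixed $d$ there are $\lesssim_{P} 1$ choices of $m_1$, and then $n_1 = m_1 + d$ is determined. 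Multiplying the $\lesssim_{\epsilon} N^{\epsilon}$ choices of $d$ by the $O_P(1)$ choices of $m_1$ gives the claimed bound $\lesssim_{\epsilon} N^{\epsilon}$.

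The only mild subtlety — and the step I'd be most careful about — is confirming that the polynomial $m_1 \mapsto P(m_1+d) - P(m_1) - \mathfrak{z}_1$ is genuinely not the zero polynomial for the relevant $d$; this is exactly where $\deg P \geq 2$ is used, since for linear $P$ the difference $P(m_1+d)-P(m_1)$ is constant in $m_1$ and the argument collapses (indeed the statement is false for linear $P$, matching the hypothesis of case \textit{(\ref{main_case_1})} of Theorem \ref{main_theorem}). With $D \geq 2$, the coefficient of $m_1^{D-1}$ is $D\, a_D\, d \neq 0$, so the polynomial is nonzero regardless of $\mathfrak{z}_1$, and the root count is valid. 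This is the template for Lemmata \ref{lemma_number_of_solutions_case_2} and \ref{lemma_number_of_solutions_case_3}: peel off one divisor-type constraint per equation and reduce to a lower-dimensional system, tracking that the successive polynomials one produces are nonzero.
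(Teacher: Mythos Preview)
Your proof is correct and is essentially the same as the paper's: both factor $P(X)-P(Y)=(X-Y)Q(X,Y)$, invoke the divisor bound on $\mathfrak{z}_1\neq 0$ to count the $\lesssim_\epsilon N^\epsilon$ possible values of $n_1-m_1$, and then observe that what remains is a nontrivial univariate polynomial equation (degree $D-1\geq 1$) with $O_P(1)$ roots. The only cosmetic difference is that the paper records the cofactor equation $Q(n_1,m_1)=d_1$ explicitly as part of a system (to set up the template for Lemmata~\ref{lemma_number_of_solutions_case_2} and~\ref{lemma_number_of_solutions_case_3}), whereas you substitute $n_1=m_1+d$ directly.
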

\begin{proof}
Observe that since $P$ is not linear it must be that for some non-vanishing non-constant $Q \in \mathbb{Z}[X,Y]$ we have  identically
\[ P(X) - P(Y) = Q(X,Y)(X - Y); \]
moreover, for any $n$ the univariate polynomial $Q(n,Y)$ is non-constant. But the polynomial identity implies that any solution to \eqref{eq:diophantine_equation_case_1} is also a solution to one of the systems of diophantine equations
\begin{equation*}
\begin{cases}
Q(n_1,m_1) &= d_1, \\
n_1 - m_1 &= d_2,
\end{cases}
\end{equation*}
with $d_1 d_2 = \mathfrak{z}_1$. Since there are only $\lesssim_\epsilon N^{\epsilon}$ such factorisations of $\mathfrak{z}_1 \neq 0$ (by the divisor bound) and since each distinct such system has clearly at most $\lesssim_{\deg P} 1$ solutions (since $Q(n_1,Y)$ is non-constant), we conclude that \eqref{eq:diophantine_equation_case_1} has at most $\lesssim_{\epsilon,\gamma} N^{\epsilon}$ solutions.
\end{proof}
The second lemma deals with a system of two diophantine equations in four variables in which the first equation is linear.
\begin{lemma}\label{lemma_number_of_solutions_case_2}
For every $\epsilon>0$ the following holds.\\
Let $P \in \mathbb{Z}[X]$ with $\deg P \geq 2$. The number of solutions $(n_1, m_1, n_2, m_2) \in [1,N]^4$ to 
\begin{equation*}
\begin{cases}
n_1 - m_1 + n_2 - m_2 &= \mathfrak{z}_1, \\
P(n_1) - P(m_1) + P(n_2) - P(m_2) &= \mathfrak{z}_2,
\end{cases} 
\end{equation*}
with $|\mathfrak{z}_1| \lesssim N, |\mathfrak{z}_2|\lesssim N^{\deg P}$ and $\mathfrak{z}_1,\mathfrak{z}_2$ not both simultaneously zero is bounded by $\lesssim_{\epsilon} N^{1 + \epsilon}$.
\end{lemma}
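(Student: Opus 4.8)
The plan is to follow the strategy of Lemma~\ref{lemma_number_of_solutions_case_1} — a polynomial factorisation followed by the divisor bound — but carried out in the auxiliary variables $a_i := n_i + m_i$. First I would record the elementary identity: writing $u_i := n_i - m_i$ (so $n_i = (a_i+u_i)/2$, $m_i = (a_i-u_i)/2$ and $a_i \equiv u_i \bmod 2$), one has $P(n_i) - P(m_i) = \mathcal{D}(a_i,u_i)$, where $\mathcal{D}(X,Y) := P\big(\tfrac{X+Y}{2}\big) - P\big(\tfrac{X-Y}{2}\big)$. The polynomial $\mathcal{D}$ is \emph{odd} in $Y$; in particular $\mathcal{D}(X,0) = 0$, so $Y$ divides $\mathcal{D}(X,Y)$, and $\mathcal{D}(X,Y)/Y$, viewed as a polynomial in $X$, has degree $\deg P - 1 \geq 1$ with leading coefficient that of $P'(X/2)$, which does \emph{not} depend on $Y$. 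The system then becomes $u_1 + u_2 = \mathfrak{z}_1$ together with $\mathcal{D}(a_1,u_1) + \mathcal{D}(a_2,u_2) = \mathfrak{z}_2$, with $a_1,a_2 \in [2,2N]$, and it suffices to bound the number of solutions $(a_1,u_1,a_2,u_2)$.

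I would first dispose of the degenerate cases $u_1 = 0$ or $u_2 = 0$. If, say, $u_1 = 0$ then $u_2 = \mathfrak{z}_1$; when moreover $\mathfrak{z}_1 = 0$ the second equation forces $\mathfrak{z}_2 = 0$, contradicting $(\mathfrak{z}_1,\mathfrak{z}_2)\neq(0,0)$, so this subcase is empty, while when $\mathfrak{z}_1 \neq 0$ the second equation reads $P(m_2+\mathfrak{z}_1) - P(m_2) = \mathfrak{z}_2$, a non-constant polynomial equation in $m_2$ with $\lesssim_{P} 1$ solutions, after which $m_1 = n_1$ is free; this contributes $\lesssim_{P} N$ solutions. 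The case $u_2 = 0$ is symmetric.

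For the main case $u_1 u_2 \neq 0$ I would substitute $u_2 = \mathfrak{z}_1 - u_1$ and freeze $a_2$ (of which there are $\lesssim N$ choices). Since $\mathcal{D}(a_1,u_1)$ is divisible by $u_1$, and $\mathcal{D}(a_2,\mathfrak{z}_1-u_1) - \mathcal{D}(a_2,\mathfrak{z}_1)$ vanishes at $u_1 = 0$ hence is also divisible by $u_1$, the second equation can be rewritten as
\[ u_1 \cdot g(u_1,a_1) = \mathfrak{z}_2 - \mathcal{D}(a_2,\mathfrak{z}_1) =: \rho(a_2), \]
where $g$ (with $a_2$ regarded as fixed) depends on $a_1$ only through $\mathcal{D}(a_1,u_1)/u_1$, so that $g(u_1,\cdot)$ is a polynomial of degree exactly $\deg P - 1 \geq 1$ whose leading coefficient is a nonzero constant independent of $u_1$. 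If $\rho(a_2) \neq 0$ then $u_1 \mid \rho(a_2)$, and as $|\rho(a_2)| \lesssim N^{\deg P}$ the divisor bound gives $\lesssim_{\epsilon} N^{\epsilon}$ admissible values of $u_1$; for each of them $g(u_1,a_1) = \rho(a_2)/u_1$ is a non-constant polynomial equation in $a_1$ with $\lesssim_{P} 1$ solutions, after which $(n_1,m_1,n_2,m_2)$ is determined. Summing over $a_2$ this contributes $\lesssim_{\epsilon} N^{1+\epsilon}$. The remaining values of $a_2$, namely those with $\rho(a_2) = 0$, are $\lesssim_{P} 1$ in number: indeed $a_2 \mapsto \mathcal{D}(a_2,\mathfrak{z}_1)$ is a non-constant polynomial of degree $\deg P - 1$ when $\mathfrak{z}_1 \neq 0$, and is identically zero while $\mathfrak{z}_2 \neq 0$ when $\mathfrak{z}_1 = 0$; for each such $a_2$ the equation becomes $u_1 g(u_1,a_1) = 0$, i.e. $g(u_1,a_1) = 0$ (as $u_1 \neq 0$), giving $\lesssim_{P} 1$ values of $a_1$ per $u_1$ and hence $\lesssim_{P} N$ solutions overall.

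The combinatorics above are routine; the point requiring care — and the reason $a_i = n_i + m_i$ are the right auxiliary variables — is arranging that after freezing the single variable $a_2$ the residual equation genuinely takes the shape ``$u_1 \times (\text{polynomial in } a_1) = (\text{fixed integer})$'' with that integer nonzero for all but $\lesssim_{P} 1$ values of $a_2$. This nonvanishing of $\rho(a_2)$ is exactly where the hypothesis $(\mathfrak{z}_1,\mathfrak{z}_2)\neq(0,0)$ enters, and it is the feature absent from the homogeneous system \eqref{eq:generic_diophantine_system}; once it is in place, the estimate reduces to a single application of the divisor bound in $u_1$ (per $a_2$) together with bounded polynomial root counts, giving the claimed $\lesssim_{\epsilon} N^{1+\epsilon}$.
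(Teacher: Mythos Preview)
Your argument is correct, but it is organised differently from the paper's. The paper does not pass to the variables $a_i=n_i+m_i$, $u_i=n_i-m_i$; instead it eliminates $m_2$ via the linear equation and invokes the three-variable factorisation
\[
P(X)-P(Y)+P(Z)-P(X-Y+Z)=Q(X,Y,Z)(X-Y)(Y-Z),
\]
then freezes $u:=n_1-m_1+n_2$ (so that $m_2=u-\mathfrak{z}_1$ is fixed) and applies the divisor bound to $M_u:=\mathfrak{z}_2+P(u-\mathfrak{z}_1)-P(u)$, exactly as in the first half of the proof of Lemma~\ref{lemma_number_of_solutions_case_3}. Your route instead factors each $P(n_i)-P(m_i)$ separately via the two-variable identity $\mathcal{D}(X,Y)=Y\cdot R(X,Y)$, freezes $a_2$, and applies the divisor bound to $\rho(a_2)$. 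Both reductions land on ``nonzero integer of size $\lesssim N^{\deg P}$, apply divisor bound, then polynomial root count''; the paper's choice is the one that visibly extends to the Vinogradov system in Lemma~\ref{lemma_number_of_solutions_case_3}, whereas your symmetric change of variables keeps the argument closer to Lemma~\ref{lemma_number_of_solutions_case_1} and avoids ever needing the three-variable $Q$.

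One small point worth making explicit: $\mathcal{D}(X,Y)=P\bigl(\tfrac{X+Y}{2}\bigr)-P\bigl(\tfrac{X-Y}{2}\bigr)$ has only rational coefficients, so $g(u_1,a_1)$ and $\rho(a_2)$ need not be integers as written; the divisor-bound step ``$u_1\mid\rho(a_2)$'' is literally valid only after clearing denominators by $2^{\deg P}$ (which leaves the sizes $\lesssim N^{\deg P}$ intact and does not affect whether $\rho(a_2)$ vanishes). This is harmless, but it should be said.
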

The proof of Lemma \ref{lemma_number_of_solutions_case_2} rests on the fact that if $P$ is non-linear then there is a non-vanishing polynomial $Q \in \mathbb{Z}[X,Y,Z]$ such that 
\begin{equation}
P(X) - P(Y) + P(Z) - P(X-Y+Z) = Q(X,Y,Z)(X-Y)(Y-Z) \label{eq:polynomial_identity}
\end{equation}
identically (and moreover, for any $n,m$ the univariate polynomial $Q(n,m,Z)$ does not vanish identically). The resulting proof is essentially a simpler version of the proof of the next lemma, and therefore we omit the details and direct the reader there.\par
The final lemma deals with a special system of three diophantine equations in six variables -- effectively an inhomogeneous version of one of the so-called Vinogradov systems.
\begin{lemma}\label{lemma_number_of_solutions_case_3}
For every $\epsilon > 0$, the following holds.\\
The number of solutions $(n_1, m_1, n_2, m_2, n_3, m_3) \in [1,N]^6$ to the diophantine system of equations 
\begin{equation}
\begin{cases}
n_1 - m_1 + n_2 & = \mathfrak{z}_1 + m_2 - n_3 + m_3, \\
n_1^2 - m_1^2 + n_2^2 & = \mathfrak{z}_2 + m_2^2 - n_3^2 + m_3^2, \\
n_1^3 - m_1^3 + n_2^3 & = \mathfrak{z}_3 + m_2^3 - n_3^3 + m_3^3, 
\end{cases} \label{eq:diophantine_system_moment_curve_3+3_variables}
\end{equation}
with $|\mathfrak{z}_1| \lesssim N, |\mathfrak{z}_2| \lesssim N^2, |\mathfrak{z}_3| \lesssim N^3$ and $\mathfrak{z}_1, \mathfrak{z}_2, \mathfrak{z}_3$ not all simultaneously zero is bounded by $\lesssim_\epsilon N^{2+\epsilon}$.
\end{lemma}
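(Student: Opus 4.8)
The plan is to mimic the proof of Lemma \ref{lemma_number_of_solutions_case_1}, exploiting a polynomial identity that exposes hidden divisibility structure in the system \eqref{eq:diophantine_system_moment_curve_3+3_variables}. First I would use the first equation to eliminate a variable: write $m_1 = n_1 + n_2 - m_2 + n_3 - m_3 - \mathfrak{z}_1$, thereby reducing the system to two equations (the quadratic and the cubic) in the five free variables $n_1, n_2, m_2, n_3, m_3$. Substituting into the quadratic and cubic equations, I expect the quadratic one to become, after cancellation, a \emph{linear} equation in $n_1$ with coefficients that are polynomials in the remaining four variables (this is the analogue of the phenomenon in \eqref{eq:polynomial_identity}): writing $a := n_1$, the quadratic equation reads $2a\cdot L(n_2,m_2,n_3,m_3) = R(n_2,m_2,n_3,m_3,\mathfrak{z}_1,\mathfrak{z}_2)$ for explicit polynomials $L, R$. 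There are then two regimes to handle. If $L \neq 0$, then $a = n_1$ is determined, so there are $O(N^3)$ choices for the tuple — too many; so one must first substitute back. Better: solve for $n_1$ in terms of the other variables from the quadratic, plug into the cubic, and obtain a single equation in the four variables $n_2, m_2, n_3, m_3$, which after clearing denominators factors nontrivially.

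More precisely, the strategy I would follow is to reduce, via the first two (now eliminated-down) equations, to a single polynomial equation in \emph{two} "difference" variables. Set $u := n_1 - m_1$, $v := n_2 - m_2$, $w := n_3 - m_3$; the first equation says $u + v + w = \mathfrak{z}_1$. The key algebraic fact I would establish is a polynomial identity of the shape
\begin{equation*}
\big(\Gamma(n_1) - \Gamma(m_1)\big) + \big(\Gamma(n_2) - \Gamma(m_2)\big) + \big(\Gamma(n_3) - \Gamma(m_3)\big) = \text{(expression)}
\end{equation*}
which, upon imposing $u + v + w = \mathfrak{z}_1$ and subtracting off the inhomogeneous terms, lets one write the remaining constraint as a product of two factors (linear-type differences) times a polynomial, equal to a quantity bounded by $\lesssim N^3$. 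Concretely, after eliminating $m_1$ from the first equation and using the quadratic equation to pin down one more variable (say $n_1$) as a rational function of the rest, substitution into the cubic equation should yield, after clearing the bounded denominator, an equation of the form $Q_1 \cdot Q_2 \cdot (\text{nonzero poly}) = \mathfrak{w}$ where $\mathfrak{w}$ is an integer with $|\mathfrak{w}| \lesssim N^{c}$ and $Q_1, Q_2$ are affine forms in the free variables. Fixing a divisor factorization $\mathfrak{w} = d_1 d_2 d_3$ (of which there are $\lesssim_\epsilon N^\epsilon$ by the divisor bound) reduces to a bounded-complexity algebraic system, which is solvable with $O(N^{2})$-many tuples once one accounts for the free parameters not pinned down by the factorization — I would count these carefully so the total is $\lesssim_\epsilon N^{2+\epsilon}$.

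The subtlety is that the factorization argument could degenerate: some factor $Q_i$ might vanish identically on a subvariety, or the "nonzero polynomial" might vanish for the chosen values, in which case the divisor-bound trick gives no information and one must argue directly. I would handle such degenerate loci separately: on each, the number of free variables drops, and the surviving equations (a genuinely nonzero polynomial constraint, since $\mathfrak{z}_1,\mathfrak{z}_2,\mathfrak{z}_3$ are not all zero) should cut down the count to within the target $N^{2+\epsilon}$ by cruder box-counting. The case analysis here — enumerating which of $u, v, w$ (or the relevant affine forms) vanish, and checking that in each surviving case the inhomogeneity $(\mathfrak{z}_1,\mathfrak{z}_2,\mathfrak{z}_3)\neq 0$ forces a nontrivial constraint — is what I expect to be the main obstacle, since it is the place where the inhomogeneity (i.e. the pruning from Section \ref{subsection_tower_construction}) is essential and where the diagonal-type solutions that plague the homogeneous Vinogradov system \eqref{eq:generic_diophantine_system} must be shown to be excluded.

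I would organize the proof as follows: (1) eliminate $m_1$ using the linear equation; (2) establish the relevant polynomial identity exhibiting the factorization $(n_1 - m_1)(n_2 - m_2)$-type factors in the quadratic/cubic combination; (3) on the generic locus where these factors are nonzero, fix a divisor factorization of the (nonzero, by hypothesis) bounded right-hand side and count solutions of the resulting bounded system, getting $\lesssim_\epsilon N^{2+\epsilon}$; (4) on each degenerate locus where a factor vanishes, reduce the number of free variables and use the remaining nontrivial constraint to get $\lesssim N^2$. Summing over the $\lesssim_\epsilon N^\epsilon$ factorizations and the $O(1)$ degenerate strata yields the bound $\lesssim_\epsilon N^{2+\epsilon}$.
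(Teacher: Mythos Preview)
Your high-level strategy --- exploit a polynomial identity to factor, apply the divisor bound on the generic locus, and handle degenerate strata separately --- is exactly what the paper does. The execution, however, diverges in a way that matters.

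The paper does \emph{not} pair the variables as $(n_i,m_i)$ and look for factors of the type $(n_1-m_1)(n_2-m_2)$. Instead it groups them into two \emph{triples}, $(n_1,m_1,n_2)$ on the left and $(m_2,n_3,m_3)$ on the right, and applies the three-variable identity
\[
P(X)-P(Y)+P(Z)-P(X-Y+Z)=Q(X,Y,Z)(X-Y)(Y-Z)
\]
(for $P(t)=t^2$ and $P(t)=t^3$) to each triple separately. Concretely, one fixes $u:=n_1-m_1+n_2$ and $t:=m_2+m_3$ (each at a cost of $N$ choices, which is where the factor $N^2$ in the final count comes from); after some manipulation the cubic equation becomes
\[
6(n_1+n_2-m_2-m_3)(n_1-m_1)(m_1-n_2)=M(u,t,\mathfrak{z}_1,\mathfrak{z}_2,\mathfrak{z}_3),
\]
with $M$ a fixed scalar $\lesssim N^3$ once $u,t$ are fixed. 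If $M\neq 0$ the divisor bound factors the left side, pinning down $n_1,m_1,n_2$; then the quadratic equation gives $(m_2-n_3)(n_3-m_3)=$ a known scalar, and a second application of the divisor bound finishes. The degenerate case $M=0$ is then handled separately (the paper omits the details, calling it ``gruelling'').

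Your proposal is vague precisely at the two points that make this work: (a) the factorization you hope for, with factors of the shape $(n_i-m_i)$, does not arise --- the correct factors are $(n_1-m_1)(m_1-n_2)$ and $(m_2-n_3)(n_3-m_3)$, coming from the three-term identity; and (b) you do not explain where the $N^2$ comes from. In the paper it comes from fixing the two auxiliary scalars $u$ and $t$ so that the right-hand side becomes a genuine constant to which the divisor bound applies; your sketch leaves $\mathfrak{w}$ depending on the remaining variables, so the divisor bound is not directly applicable. Once you regroup into triples and fix $u,t$ first, your plan in steps (3)--(4) goes through as you describe.
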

\begin{proof}
For solutions of the type we want, the quantity $u := n_1 - m_1 + n_2$ can take at most $\lesssim N$ values. Fix then such a value $u \lesssim N$ and observe that using the polynomial identities (particular cases of \eqref{eq:polynomial_identity})
\begin{align*}
X^2 - Y^2 + Z^2 - (X - Y + Z)^2 &= 2(X-Y)(Y-Z), \\
X^3 - Y^3 + Z^3 - (X - Y + Z)^3 &= 3(X+Z)(X-Y)(Y-Z),
\end{align*}
we can rewrite system \eqref{eq:diophantine_system_moment_curve_3+3_variables} as 
\begin{equation}
\begin{cases}
n_1 - m_1 + n_2  &= u,\\
m_2 - n_3 + m_3 & = u - \mathfrak{z}_1, \\
2(n_1 - m_1)(m_1 - n_2) & = 2(m_2 - n_3)(n_3 - m_3) \\
& \hspace{1em} - u^2 + \mathfrak{z}_2 + (u-\mathfrak{z}_1)^2, \\
3(n_1 + n_2)(n_1 - m_1)(m_1 - n_2) & = 3(m_2+m_3)(m_2 - n_3)(n_3 - m_3) \\
 & \hspace{1em} - u^3 +\mathfrak{z}_3 + (u-\mathfrak{z}_1)^3.
\end{cases}  \label{eq:factorised_diophantine_system}
\end{equation}
We stress that if $(n_1, m_1, n_2, m_2, n_3, m_3) \in [-N,N]^6$ is a solution to \eqref{eq:diophantine_system_moment_curve_3+3_variables} then for some value of $u\lesssim N$ it is a solution to \eqref{eq:factorised_diophantine_system} too.\par 
Multiplying by $2$ and using the quadratic equation, we see that we can rewrite the cubic equation above as 
\begin{align*}
3(n_1 + n_2)2&(n_1 - m_1)(m_1 - n_2) \\
&= 3(m_2+m_3) [2(n_1 - m_1)(m_1 - n_2) + u^2 - \mathfrak{z}_2 - (u-\mathfrak{z}_1)^2] \\
& \hspace{1em} +2(\mathfrak{z}_3 + (u-\mathfrak{z}_1)^3 - u^3),
\end{align*}
and rearranging 
\begin{equation*}
\begin{aligned}
6(n_1 + n_2 - & m_2 - m_3)(n_1 - m_1)(m_1 - n_2) \\
& = 3(m_2+m_3)[u^2 - \mathfrak{z}_2 - (u-\mathfrak{z}_1)^2] +2(\mathfrak{z}_3 + (u-\mathfrak{z}_1)^3 - u^3).
\end{aligned} 
\end{equation*}
%\label{eq:factorised_cubic_equation}
Letting $t = m_2 + m_3$ we observe that, once $t$ is also fixed, if 
\[ M := 3t[u^2 - \mathfrak{z}_2 - (u-\mathfrak{z}_1)^2] +2(\mathfrak{z}_3 + (u-\mathfrak{z}_1)^3 - u^3) \neq 0 \]
then since $M \lesssim N^3$ this number can be factorised as $M = 6 \,d_1 d_2 d_3$ into at most $\lesssim_\epsilon N^\epsilon$ ways, by the divisor bound. Solutions to \eqref{eq:diophantine_system_moment_curve_3+3_variables} are therefore solutions to one of the systems 
\begin{equation*}
\begin{cases}
n_1 - m_1 + n_2  &= u,\\
m_2 - n_3 + m_3 & = u - \mathfrak{z}_1, \\
m_2 + m_3 &= t, \\
n_1 + n_2 - m_2 - m_3 &= d_1, \\
n_1 - m_1 &= d_2, \\
m_1 - n_2 &= d_3,
\end{cases}
\end{equation*}
obtained by choosing $u,t$ and factorising $M$. This system is not linearly independent (it has rank 5) but it contains enough equations to fix the values of, say, $n_1,m_1,n_2$. By the quadratic equation of \eqref{eq:factorised_diophantine_system} we have then
\begin{equation}
2(m_2 - n_3)(n_3 - m_3) = 2 d_2 d_3 + u^2 - \mathfrak{z}_2 - (u-\mathfrak{z}_1)^2.  \label{eq:quadratic_equation_intermediate_step}
\end{equation}
If the right-hand side of \eqref{eq:quadratic_equation_intermediate_step} is non-zero we can invoke the divisor bound again to factor it as $2d_4 d_5$ and thus reduce to the system 
\begin{equation*}
\begin{cases}
m_2 - n_3 + m_3 & = u - \mathfrak{z}_1, \\
m_2 + m_3 &= t, \\
m_2 - n_3 &= d_4, \\
n_3 - m_3 &= d_5,
\end{cases}
\end{equation*}
which can have at most $1$ solution (if any). If the right-hand side of \eqref{eq:quadratic_equation_intermediate_step} is instead zero then it must be either $m_2 - n_3 =0$ or $n_3 - m_3 = 0$; either of them gives a well-posed system of linear equations and thus at most $1$ solution. This analysis has thus shown that for values of $u,t$ such that $M \neq 0$ the system has at most $\lesssim_\epsilon N^{2\epsilon}$ solutions; since $|u|,|t| \lesssim N$, we obtain from these situations a contribution of at most $\lesssim_\epsilon N^{2+2\epsilon}$ solutions to \eqref{eq:diophantine_system_moment_curve_3+3_variables}.\par
The case in which $M = 0$ for some choice of parameters has to be dealt with separately, that which involves the gruelling analysis of several subcases (the assumption that $\mathfrak{z}_1,\mathfrak{z}_2,\mathfrak{z}_3$ are not all simultaneously zero is crucial here). However, to show that the cases in which $M=0$ contribute at most some more $\lesssim_\epsilon N^{2+\epsilon}$ solutions to \eqref{eq:diophantine_system_moment_curve_3+3_variables} altogether, the very same arguments used above (or variations thereof) suffice; hence we omit the details. The proof of the Lemma is thus concluded.
\end{proof}

% Uncomment the following two lines if you want to have a bibliography
\bibliographystyle{abbrv}
\bibliography{bibliography}

\end{document}